\newtheorem{theorem}{Theorem}
\newtheorem{lemma}{Lemma}
\newtheorem{definition}{Definition}
\newcommand{\norm}[1]{\left\lVert#1\right\rVert}
\newcommand{\brac}[1]{\left(#1\right)}
\newcommand{\R}{\mathbb{R}}
\newcommand{\C}{\mathbb{C}}
\begin{document}
%
\title{Inverse modified differential equations for discovery of dynamics}


\author{\IEEEauthorblockN{Aiqing Zhu,
Pengzhan Jin,
Beibei Zhu, and
Yifa Tang}

\thanks{
This work is supported by the Major Project on New Generation of Artificial Intelligence from MOST of China (Grant No. 2018AAA0101002), and National Natural Science Foundation of China (Grant Nos. 11771438 and 11901564). Corresponding author: Yifa Tang (email: tyf@lsec.cc.ac.cn).

Aiqing and Yifa Tang are with LSEC, ICMSEC, Academy of Mathematics and Systems Science, Chinese Academy of Sciences, Beijing 100190, China and School of Mathematical Sciences, University of Chinese Academy of Sciences, Beijing 100049, China (email: zaq@lsec.cc.ac.cn; tyf@lsec.cc.ac.cn)

Pengzhan Jin is with School of Mathematical Sciences, Peking University, Beijing 100871, China. (email: jpz@math.pku.edu.cn)

Beibei Zhu is with School of Mathematics and Physics, University of Science and Technology Beijing, Beijing 100083, China (email: zhubeibei@lsec.cc.ac.cn)

}}

%




\maketitle

\IEEEdisplaynontitleabstractindextext

%
\IEEEpeerreviewmaketitle

\begin{abstract}
The combination of numerical integration and deep learning, i.e., ODE-net, has been successfully employed in a variety of applications. In this work, we introduce inverse modified differential equations (IMDE) to contribute to the behaviour and error analysis of discovery of dynamics using ODE-net. It is shown that the difference between the learned ODE and the truncated IMDE is bounded by the sum of learning loss and a discrepancy which can be made sub exponentially small. In addition, we deduce that the total error of ODE-net is bounded by the sum of discrete error and learning loss. Furthermore, with the help of IMDE, theoretical results on learning Hamiltonian system are derived. Several experiments are performed to numerically verify our theoretical results.
\end{abstract}


{\bf Keywords:}\
Deep learning,
Data-driven discovery, 
ODE-net, 
Numerical integration, 
Error estimation, 
Hamiltonian system.
\bigskip

\section{Introduction}
%
%
%
%

Identification of nonlinear system is a significant task existing in diverse applications \cite{brunton2019data,schmidt2009distilling}. Neural network  has became a powerful approach for such task, and a series of continuous models combined numerical integrator and neural networks had already been developed and implemented to learn hidden dynamics decades ago \cite{anderson1996comparison,gonzalez1998identification,rico1994continuous,rico1993continuous}. Recently, neural network is experiencing a renaissance with the growth of available data and computing resources. At the same time, many researchers pay attention to the connection between dynamical systems and deep neural networks and have done many related works in terms of algorithms, architectures and applications \cite{chen2018neural,e2017proposal,e2019mean, li2017maximum}. In particular, the continuous models have again attracted more and more attention and several ODE based models have been developed for discovery of hidden dynamics \cite{bertalan2019learning,chen2018neural,kolter2019learning,raissi2018multistep,yu2020onsagernet}.

Numerical integration plays an essential role in these ODE based models, as reported in \cite{gusak2020towards,ott2021resnet,queiruga2020continuous,zhuang2020adaptive}. However, their rigorous analysis is still under investigation. In \cite{keller2021discovery}, a framework based on refined notions is established for convergence and stability analysis of Linear Multistep Neural Networks (LMNets) \cite{raissi2018multistep}. Error estimation is enriched in \cite{du2021discovery}, which indicates that the grid error of LMNets is bounded by the sum of discrete error and approximation error under auxiliary initial conditions. Different from their work, we concentrate on the behaviour and analysis of general ODE-net. Here, the unknown governing vector field is approximated by neural networks with given several phase points as training set. The training process is to minimize the difference between real states and predicted outputs of an ODE solver.

The main ingredient of this work is formal analysis \cite{feng1991formal}. Historically, modified differential equation is an important tool for understanding the numerical behavior of ordinary differential equation (ODE) \cite{eirola1993aspects,feng1993formal,sanz1992symplectic,yoshida1993recent}. The methodology is to interpret the numerical solution as the exact solution of a perturbed equation. In addition, modified integrator \cite{chartier2007numerical}
is developed for high order numerical integration. For a given system of differential equation, they tried to search a perturbed differential equation such that its numerical solution matches the exact solution of the original system. In this paper, we use the same idea as modified integrator but for analysis of discovery using ODE-net. It is shown that training ODE-net returns an approximation of the perturbed equation. We name the obtained perturbed equation as inverse modified differential equation (IMDE) since discovery is an inverse problem.

We first apply the IMDE for general ODE solver and prove that several compositions of a numerical integrator has the same IMDE as the numerical integrator itself. In addition, IMDE approach can be applied to LMNet, results in explicit recursion formula for linear multistep method. Furthermore, learning Hamiltonian system is also discussed. It is found that for a Hamiltonian system, the IMDE based on the symplectic integrator is still a Hamiltonian system.

The formal series expressing IMDE does not converge in general and has to be truncated. Following conventional truncation theory \cite{benettin1994hamiltonian,hairer1999backward,hairer1997life,reich1999backward}, the truncation inequalities are tailored to IMDE scenario under analyticity assumption. It is shown that the difference can be made sub exponentially small. As a result, we obtain the rigorous error analysis for discovery using ODE-net. In summary, we list several statements derived via IMDE that will be documented in detail later:
\begin{itemize}
    \item ODE-nets have almost certain approximation target, i.e., the difference between the learned vector field $f_{net}$ and the truncation of the vector field of IMDE $f_h^N$ is bounded by the sum of learning loss and a discrepancy which can be made sub exponentially small in the data step.
    \item The error between the trained network $f_{net}$ and the unknown vector field $f$ is bounded by the sum of discrete error $Ch^p$ and learning loss, where $h$ is the discrete step and $p$ is the order of the numerical integrator.
    \item Both ODE-net using non-symplectic integrators and LMNet tend not to learn conservation laws theoretically.
    \item HNN with symplectic integrator have almost certain approximation target. However, this conclusion is not always true for HNN with non-symplectic numerical integrators.
\end{itemize}

The rest of this paper is organized as follows. In Section \ref{sec:pre}, we briefly present some necessary notations, numerical integration, modified differential equations and modified integrator. The existing ODE based network architectures including ODE-net, LMNet and HNN are also introduced. In Section \ref{sec:IMDE}, we investigate IMDE for these learning models. In particular, learning Hamiltonian system is discussed. The rigorous analysis for ODE-net are detailed in Section \ref{sec:error}. In Section \ref{sec:Numerical results}, several numerical results are provided to verify the theoretical findings. Section \ref{sec:summary} contains a brief summary and several comments on the future work.

\section{Preliminaries}\label{sec:pre}
Without loss of generality, the attention in this paper will be addressed to autonomous systems of first-order ordinary differential equations
\begin{equation}\label{eq:ODE}
\frac{d}{dt}y(t) = f(y(t)),\quad y(0)=x,
\end{equation}
where $y(t) \in \mathbb{R}^N$ and $f:\mathbb{R}^N \rightarrow \mathbb{R}^N$ is smooth. The initial value is denoted as $x$ in this paper. A non-autonomous system $\frac{d}{dt}y(t) = f(t,y(t),p)$ with parameter $p$ can be brought into this form by appending the equation $\frac{d}{dt}t = 1$ and $\frac{d}{dt}p = 0$. Let $\phi_{t}(x)$ be the exact solution and $\Phi_{h}(x)$ be the numerical solution with discrete step $h$. In order to emphasize specific differential equation, we will add the subscript $f$ and denote $\phi_t$ as $\phi_{t,f}$ and $\Phi_h$ as $\Phi_{h,f}$ . The choice for ODE solver in this paper is $S$ compositions of a numerical integrator, i.e.,
\begin{equation*}
\text{ODESolve}(x, f, T)=\underbrace {\Phi_{h,f} \circ \cdots \circ \Phi_{h,f}}_{\text{ $S$ compositions}}(x)= \brac{\Phi_{h,f}}^S(x),
\end{equation*}
where $T=Sh$ with discrete step $h$ and composition number $S$.

\subsection{Numerical integration: brief review}
In the last few decades, several kinds of numerical integrations have been developed for ordinary differential equations, including Runge-Kutta methods and linear multistep methods. We recall some basic definitions and essential supporting results here. Refer to \cite{butcher1987the, hairer2006geometric, hairer1996solving} for more presentations of integrators. Below we first present the concepts of order and consistency.

\textbf{Order.}
An integrator $\Phi_{h}(x)$ with discrete step $h$ has order $p$, if for any sufficiently smooth equation (\ref{eq:ODE}) with arbitrary initial value $x$,
\begin{equation*}
\Phi_{h,f}(x)=\phi_{h,f}(x)+O(h^{p+1}).
\end{equation*}

\textbf{Consistency.}
An integrator is consistent if it has order $p\geq 1$.

\subsubsection{Runge-Kutta methods}
Let $b_i,a_{ij}\ (i,j=1,\cdots, s)$ be real numbers and let $c_i=\sum_{j=1}^sa_{ij}$. An $s$-stage Runge-Kutta method for (\ref{eq:ODE}) is defined as
\begin{equation}\label{runge-kutta}
\begin{aligned}
k_i =& f\left(y_0+h\sum_{j=1}^sa_{ij}k_j\right), \quad i=1,\cdots ,s,\\
y_1 =& y_0+h\sum_{i=1}^sb_ik_i,
\end{aligned}
\end{equation}
where the function $f$ is given and $\Phi_{h,f}(y_0)=y_1$. The method is explicit if $a_{ij}=0$ for $i\leq j$ and implicit otherwise. For sufficiently small $h$, the slopes $k_1,\cdots, k_s$ have local solutions close to $f(y_0)$ guaranteed by Implicit Function Theorem.

\begin{theorem}\label{the:RKexp}
The derivatives of the solution of a Runge-Kutta method (\ref{runge-kutta}) with respect to $y_0$, for $h=0$, are given by
\begin{equation*}
y_1^{(k)}|_{h=0}=\sum_{|\tau|=k}\gamma(\tau) \cdot \alpha(\tau) \cdot \phi(\tau) \cdot F(\tau)(y_0).
\end{equation*}
Here, $\tau$ is called trees and $|\tau|$ is the order of $\tau$ (the number of vertices). $\gamma(\tau)$, $\phi(\tau)$, $\alpha(\tau)$ are positive integer coefficients, $F(\tau)(y)$ is called elementary differentials and typically composed of $f(y)$ and its derivatives.
\end{theorem}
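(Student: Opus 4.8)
The plan is to expand $y_1$ as a Taylor series in the step size $h$ about $h=0$ and to identify the coefficient of each power $h^q/q!$ with the tree sum. First I would record the two structural identities that drive everything: writing $g_i(h)=\sum_{j=1}^s a_{ij}k_j(h)$ and $u_i(h)=y_0+hg_i(h)$, the internal stages satisfy $k_i=f(u_i)$, and because $u_i$ and $y_1$ depend on $h$ only through an explicit prefactor $h$ multiplying $g_i$ respectively $\sum_i b_i k_i$, the Leibniz rule gives at $h=0$
\begin{equation*}
u_i^{(l)}(0)=l\,g_i^{(l-1)}(0),\qquad y_1^{(q)}(0)=q\sum_{i=1}^s b_i\,k_i^{(q-1)}(0),\qquad l,q\geq 1,
\end{equation*}
with $u_i(0)=y_0$. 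Thus everything reduces to computing the $h$-derivatives of the stages $k_i=f(u_i)$ at $h=0$.

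Then I would set up an induction on the order $q=|\tau|$, proving the stagewise formula
\begin{equation*}
k_i^{(q-1)}(0)=\sum_{|\tau|=q}\frac{\gamma(\tau)\alpha(\tau)}{|\tau|}\,\phi_i(\tau)\,F(\tau)(y_0),
\end{equation*}
where $\phi_i$ is the internal elementary weight attached to stage $i$ and $F(\tau)$ is the elementary differential defined recursively by $F(\bullet)=f$ and $F(\tau)=f^{(m)}\brac{F(\tau_1),\dots,F(\tau_m)}$ for a tree $\tau$ whose root carries the $m$ subtrees $\tau_1,\dots,\tau_m$. Summing this against $b_i$ and using the second identity above, together with $\phi(\tau)=\sum_i b_i\phi_i(\tau)$ and $\gamma(\tau)=|\tau|\prod_j\gamma(\tau_j)$, would yield the claimed expansion for $y_1$. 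The base case $q=1$ is immediate: the only tree is the single vertex, $k_i(0)=f(y_0)=F(\bullet)(y_0)$, and $\gamma=\alpha=\phi_i=1$.

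For the inductive step I would differentiate $k_i=f(u_i)$ exactly $q-1$ times via the higher-order chain rule (Faà di Bruno): each term is $f^{(m)}(y_0)$ applied to an ordered tuple $\brac{u_i^{(l_1)}(0),\dots,u_i^{(l_m)}(0)}$ with $l_1+\cdots+l_m=q-1$, weighted by the multinomial coefficient that counts how the $q-1$ differentiations are distributed among the $m$ arguments. Each factor $u_i^{(l)}(0)=l\,g_i^{(l-1)}(0)=l\sum_j a_{ij}k_j^{(l-1)}(0)$ is expanded by the induction hypothesis, which turns it into a sum over subtrees of order $l$, contributing the root-to-child edge weight $a_{ij}$ (hence building up $\phi_i$) and the elementary differential $F(\tau_r)$ (hence assembling the composite $F(\tau)$ with root valence $m$). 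Collecting the terms according to the resulting tree shape recovers the recursions $\gamma(\tau)=|\tau|\prod_j\gamma(\tau_j)$ and $\phi_i(\tau)=\sum_{j_1,\dots,j_m}a_{ij_1}\cdots a_{ij_m}\phi_{j_1}(\tau_1)\cdots\phi_{j_m}(\tau_m)$.

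The main obstacle is the combinatorial bookkeeping of the coefficient $\alpha(\tau)$. The factorials produced by the prefactors $l$ in $u_i^{(l)}(0)$ assemble into $\gamma$, and the edge sums assemble into $\phi_i$, but the multinomial factors from Faà di Bruno must be reconciled with the overcounting that occurs when several of the subtrees $\tau_1,\dots,\tau_m$ at the root are isomorphic. The clean way to handle this is to pass through the symmetry factor $\sigma(\tau)$: one shows that the raw count equals $|\tau|!/\sigma(\tau)$ and then uses the identity $\gamma(\tau)\alpha(\tau)=|\tau|!/\sigma(\tau)$, so that ordered versus unordered labellings of identical subtrees are matched correctly. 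Verifying this symmetry accounting --- equivalently, that the number of monotone labellings of $\tau$ is exactly $\alpha(\tau)$ --- is the delicate part; everything else is the routine propagation of the two structural identities through the induction. Since this is the classical characterization of Runge--Kutta B-series, I would alternatively simply invoke the standard references \cite{butcher1987the,hairer2006geometric}.
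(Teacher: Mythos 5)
The paper gives no proof of this theorem at all: it is the classical Butcher/B-series result, stated essentially verbatim from the literature, and the paper immediately defers to \cite[Chapter III]{hairer2006geometric} for ``detailed proof and computation.'' Your sketch is precisely the standard induction found in that cited reference (the stage-derivative identities at $h=0$, Fa\`a di Bruno on $k_i=f(u_i)$, the recursions for $\gamma(\tau)$ and $\phi_i(\tau)$, and the symmetry-factor identity $\gamma(\tau)\alpha(\tau)=|\tau|!/\sigma(\tau)$ to settle the count $\alpha(\tau)$), and your fallback of simply invoking the standard references coincides with what the paper actually does, so your proposal is correct and takes essentially the same approach.
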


\begin{table}[htbp]
    \centering
    \begin{tabular}{|c|c|c|c|c|c|}
    \hline
       $|\tau|$& $\tau$             &$\gamma(\tau)$& $\alpha(\tau)$ & $\phi(\tau)$ & $F(\tau)$ \\
     \hline
     \hline
       1&$\bullet$                  &1 &1 & $\sum_ib_i$                        & $f$           \\
     \hline
       2&$[\bullet]$                &2 &1 & $\sum_{ij}b_ia_{ij} $              & $f'f$         \\
     \hline
       3&$[\bullet,\bullet]$        &3 &1 & $\sum_{ijk}b_ia_{ij}a_{ik}$        & $f''(f,f)$    \\

       3&$[[\bullet]]$              &6 &1 & $\sum_{ijk}b_ia_{ij}a_{jk}$        & $f'f'f$       \\
     \hline
       4&$[\bullet,\bullet,\bullet]$&4 &1 & $\sum_{ijkl}b_ia_{ij}a_{ik}a_{il}$ & $f'''(f,f,f)$ \\

       4&$[[\bullet],\bullet]$      &8 &3 & $\sum_{ijkl}b_ia_{ij}a_{ik}a_{jl}$ & $f''(f'f,f)$  \\

       4&$[[\bullet,\bullet ]]$     &12&1 & $\sum_{ijkl}b_ia_{ij}a_{jk}a_{jl}$ & $f'f''(f,f)$  \\

       4&$[[[\bullet]]]$            &24&1 & $\sum_{ijkl}b_ia_{ij}a_{jk}a_{kl}$ & $f'f'f'f$     \\
     \hline
    \end{tabular}
    \caption{Trees, elementary differentials and coefficients}
    \label{tab:RKtree}
\end{table}

Some $\gamma(\tau), \alpha(\tau), \phi(\tau), F(\tau)$ are reported in Table \ref{tab:RKtree}, detailed proof and computation can be found in \cite[Chapter \uppercase\expandafter{\romannumeral3}]{hairer2006geometric}. Here, the notation $f'(x)$ is a linear map (the Jacobian), the second order derivative $f''(x)$ is a symmetric bilinear map and similarly for higher order derivatives described as tensor. Due to Theorem \ref{the:RKexp}, the formal expansion of a Runge-Kutta method with initial condition $y_0=x$ is given by
\begin{equation*}
\Phi_{h,f}(x)=y+hd_{1,f}(x)+h^2d_{2,f}(x)+ \cdots,
\end{equation*}
where
\begin{equation*}
d_{k,f}(x) = \frac{1}{k!}y_1^{(k)}|_{h=0}=\frac{1}{k!}\sum_{|\tau|=k}\gamma(\tau) \cdot \alpha(\tau) \cdot \phi(\tau) \cdot F(\tau)(x).
\end{equation*}

\subsubsection{Linear multistep methods}
For first order differential equations (\ref{eq:ODE}), linear multistep methods are defined by the formula
\begin{equation}\label{eq:LMM}
\sum_{m=0}^M \alpha_my_{m} - h\sum_{m=0}^M\beta_mf(y_{m})=0,
\end{equation}
where $\alpha_m,\beta_m$ are real parameters, $\alpha_M \neq 0$  and $|\alpha_0|+ |\beta_0|>0$. In \cite{feng1998the}, it is shown that weakly stable multistep methods are essentially equivalent to one-step methods.
\begin{theorem}\label{the:step-transition operator}
Consider a weakly stable multistep method (\ref{eq:LMM}), there exists a unique formal expansion
\begin{equation*}
\Phi_{h,f}(x)=y+hd_{1,f}(x)+h^2d_{2,f}(x)+ \cdots
\end{equation*}
such that
\begin{equation*}
\sum_{m=0}^M \alpha_m\Phi_{mh,f}(x) = h\sum_{m=0}^M\beta_mf(\Phi_{mh,f}(x))
\end{equation*}
for arbitrary initial value $x$, where the identity is understood in the sense of the formal power series in $h$.
\end{theorem}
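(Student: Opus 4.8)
The plan is to construct the coefficients $d_{k,f}$ recursively by substituting the formal ansatz $\Phi_{h,f}(x)=y+hd_{1,f}(x)+h^2d_{2,f}(x)+\cdots$ into the defining relation $\sum_{m=0}^M \alpha_m\Phi_{mh,f}(x) = h\sum_{m=0}^M\beta_mf(\Phi_{mh,f}(x))$ and matching powers of $h$. First I would record the elementary but crucial observation that $\Phi_{mh,f}(x)$ is obtained from the single series for $\Phi_{h,f}$ simply by replacing $h$ with $mh$, so that $\Phi_{mh,f}(x)=y+\sum_{k\ge 1}(mh)^k d_{k,f}(x)$. This is what makes the method ``one-step'' in disguise: both sides of the defining identity become genuine formal power series in the single variable $h$, and no separate sequence of iterates needs to be tracked.

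Next I would expand both sides as power series in $h$. The left-hand side gives $\sum_{k\ge 0} h^k\left(\sum_{m=0}^M \alpha_m m^k\right) d_{k,f}(x)$ with the convention $d_{0,f}=\mathrm{id}$. The right-hand side requires composing $f$ with the series $\Phi_{mh,f}(x)$ and then Taylor-expanding $f$ about $x=y$; collecting the coefficient of $h^k$ there produces a term $\left(\sum_{m=0}^M \beta_m m^{k-1}\right)f(x)$ plus an expression that depends only on $d_{1,f},\dots,d_{k-1,f}$ and on derivatives of $f$. Equating the coefficient of $h^k$ on both sides then yields a relation of the form
\begin{equation*}
\left(\sum_{m=0}^M \alpha_m m^k\right) d_{k,f}(x) = \left(\sum_{m=0}^M \beta_m m^{k-1}\right)f(x) + R_k(d_{1,f},\dots,d_{k-1,f}),
\end{equation*}
where $R_k$ is a known polynomial expression in the lower-order coefficients. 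The key point for solvability is that the scalar factor $\sigma_k:=\sum_{m=0}^M \alpha_m m^k$ must be nonzero for every $k\ge 1$, so that $d_{k,f}$ is uniquely determined by the previously constructed coefficients; this simultaneously establishes existence and uniqueness of the formal expansion by induction on $k$.

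The step I expect to be the main obstacle is verifying that $\sigma_k\neq 0$ for all $k$, since this is exactly where the \emph{weak stability} hypothesis enters and cannot be bypassed. I would tie $\sigma_k$ to the generating polynomial $\rho(\zeta)=\sum_{m=0}^M \alpha_m \zeta^m$ and its behaviour under the operator $\zeta\frac{d}{d\zeta}$ evaluated at $\zeta=1$; weak stability constrains the roots of $\rho$ (the root $\zeta=1$ is simple and all other roots lie strictly inside the unit disk or are simple on it), and I would use these root conditions to rule out the degenerate vanishing of $\sigma_k$. The remaining bookkeeping---checking that $R_k$ genuinely involves only $d_{1,f},\dots,d_{k-1,f}$ and that each $d_{k,f}$ is a polynomial in $f$ and its derivatives---is routine multivariate Taylor expansion and follows the same pattern as the Runge--Kutta expansion in Theorem \ref{the:RKexp}, so I would state it rather than grind through it. For the base case one checks directly that consistency ($\sum_m \alpha_m=0$, $\sum_m m\alpha_m=\sum_m\beta_m$) forces $d_{1,f}=f$, matching the order-one normalization.
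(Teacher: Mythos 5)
First, a caveat on the comparison: the paper does not prove this theorem at all --- it is imported from Feng's work on step-transition operators \cite{feng1998the} --- so your proposal can only be measured against that theory. Measured that way, it has a genuine gap, located exactly at what you call your ``crucial observation.'' You read $\Phi_{mh,f}(x)$ as the series $\Phi_{h,f}(x)$ with $h$ replaced by $mh$. Under that reading, matching powers of $h$ makes the pivot multiplying the unknown $d_{k,f}$ equal to $\sigma_k:=\sum_{m=0}^M\alpha_m m^k$, and your induction needs $\sigma_k\neq 0$ for \emph{every} $k\geq 1$. But weak stability, as defined in the paper, is the single condition $\sigma_1=\sum_m m\alpha_m=\rho'(1)\neq 0$; it imposes no constraint whatsoever on the other roots of $\rho(\zeta)=\sum_m\alpha_m\zeta^m$, so your parenthetical claim that it forces the roots into the closed unit disk is incorrect (that is the root condition, i.e.\ zero-stability, a different and stronger hypothesis, and even under it the non-vanishing of all $\sigma_k$ would still require proof). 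Concretely, take $\rho(\zeta)=(\zeta-1)(\zeta-3)=\zeta^2-4\zeta+3$ and $\sigma(\zeta)=-2\zeta$, i.e.\ $y_2-4y_1+3y_0=-2hf(y_1)$: this method is consistent and weakly stable ($\rho(1)=0$, $\rho'(1)=\sigma(1)=-2$), yet $\sigma_2=(-4)\cdot 1+1\cdot 4=0$, while at order $h^2$ your right-hand side is $\sum_m\beta_m m\,f'(x)d_{1,f}(x)=-2f'f(x)\not\equiv 0$ (here $d_{1,f}=f$). So at $k=2$ your recursion reads $0\cdot d_{2,f}=-2f'f$ and has no solution: under your interpretation the theorem is false, and no amount of bookkeeping can repair it.

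The resolution is that the defining relation of the step-transition operator involves the $m$-fold \emph{composition} $(\Phi_{h,f})^m$, not the rescaled series: the numerical sequence of the multistep method is $y_m=(\Phi_{h,f})^m(y_0)$, and that is what must satisfy (\ref{eq:LMM}). The two readings coincide for the exact flow $\phi$ (which is why the paper's notation is seductive, and why the proof of Theorem \ref{the:inmde_lmnet} with $\phi_{mh,f}$ may legitimately rescale), but not for a formal one-step map. With iterates one has
\begin{equation*}
(\Phi_{h,f})^m(x)=x+mh\,d_{1,f}(x)+h^2\Bigl(m\,d_{2,f}(x)+\binom{m}{2}\,d_{1,f}'d_{1,f}(x)\Bigr)+\cdots,
\end{equation*}
and, by induction, at every order $h^k$ the unknown $d_{k,f}$ enters linearly with coefficient $m$, so after summing against $\alpha_m$ the pivot is always $\sum_m m\alpha_m=\rho'(1)$, independent of $k$; the right-hand side at order $h^k$ still involves only $d_{1,f},\dots,d_{k-1,f}$. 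Weak stability is then precisely the hypothesis needed to solve for each $d_{k,f}$, which is how the cited argument runs. Two of your side remarks survive intact: the order-$h^0$ equation does force $\rho(1)=\sum_m\alpha_m=0$, a pre-consistency condition the theorem's wording omits (a defect of the statement, not of your argument); and your displayed right-hand side coefficient should in any case be $(\sum_m\beta_m m^{k-1})f'(x)d_{k-1,f}(x)+\cdots$ rather than a multiple of $f(x)$, though that slip is cosmetic next to the interpretation issue.
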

Here, weak stability requires
\begin{equation}\label{eq:weak stability}
\sum_{m=0}^M m\cdot \alpha_m \neq 0,
\end{equation}
$\Phi_{h,f}(x)$ is called ``step-transition operator'' \cite{feng1998the}, which also provides the formal expansion of linear multistep methods.

\subsubsection{Symplectic integration methods}
For even dimension $N$, denote the $N/2$-by-$N/2$ identity matrix by $I$, and let
\begin{equation*}
J=\begin{pmatrix} 0 & I \\ -I & 0\end{pmatrix}.
\end{equation*}
\begin{definition}
A differentiable map $g : U \rightarrow \mathbb{R}^{N}$ (where $N$ is even and $U\subseteq \mathbb{R}^{N}$ is an open set) is called symplectic if
\begin{equation*}
g'(x)^{T}Jg'(x)=J,
\end{equation*}
where $g'(x)$ is the Jacobian of $g(x)$.
\end{definition}
A Hamiltonian system is given by
\begin{equation}\label{eq:Hami}
\frac{d}{dt}y = J^{-1} \nabla H(y), \quad y(0)=x,
\end{equation}
where $y \in  \mathbb{R}^{N}$ and $H$ is the Hamiltonian function typically representing the energy of (\ref{eq:Hami}) \cite{arnold2013mathematical, arnold2007mathematical}. A remarkable property of Hamiltonian system is the symplecticity of the phase flow, which is proved by Poincar\'{e} in 1899 \cite[Section 38]{arnold2013mathematical}, i.e.,
\begin{equation*}
\phi_t'(x)^T J \phi_t'(x) = J,
\end{equation*}
where $\phi_t'(x) = \frac{\partial \phi_t(x)}{\partial x}$ is the Jacobian of $\phi_t$. Due to the intrinsic symplecticity, it is natural to search for numerical methods that preserve this structure, i.e., make $\Phi_h$ be a symplectic map. There are some well-developed works on symplectic integration, see for example \cite{feng1984on, feng1986difference, hairer2006geometric,sanz1992symplectic}. It should be noticed that both linear multistep method and explicit
Runge-Kutta method can not be always symplectic \cite{hairer2006geometric,tang1993symplecticity}.

\subsubsection{Lie derivatives}
Following \cite{hairer2006geometric}, we briefly review Lie derivatives. Given (\ref{eq:ODE}), Lie derivative $D$ is the differential operator defined as:
\begin{equation*}
Dg(y)=g'(y)f(y)
\end{equation*}
for $g:\mathbb{R}^N \rightarrow \mathbb{R}^M$. According to the chain rule, we have
\begin{equation*}
\frac{d}{dt}g(\phi_{t,f}(x))=(Dg)(\phi_{t,f}(x))
\end{equation*}
and thus obtain the Taylor series of $g(\phi_{t,f}(x))$ developed at $t=0$:
\begin{equation}\label{eq:Ld}
g(\phi_{t,f}(x))=\sum_{k=0}^{\infty}\frac{t^k}{k!}(D^kg)(x).
\end{equation}
In particular, by setting $t=h$ and $g(y)=I_N(y)=y$, the identity map, it turns to the Taylor series of the exact solution $\phi_{h,f}$ itself, i.e.,
\begin{equation}\label{eq:exasolu}
\begin{aligned}
\phi_{h,f}(x)=&\sum_{k=0}^{\infty}\frac{h^k}{k!}(D^kI_N)(x)\\
=&x+hf(x)+\frac{h^2}{2}f'f(x)\\
&+\frac{h^3}{6}(f''(f,f)(x)+f'f'f(x))+\cdots .
\end{aligned}
\end{equation}

\subsection{Modified differential equations and modified integrator}\label{mde}
Modified differential equation is a well-established tool for numerical treatment of ordinary differential equation. The approach is to search a perturbed differential equation
\begin{equation}\label{eq:mde}
\frac{d}{dt}\tilde{y}(t)=f_h(\tilde{y}(t))  = f_0(\tilde{y})+hf_1(\tilde{y})+h^2f_2(\tilde{y})+\cdots,
\end{equation}
such that $\Phi_{h,f}(x) = \phi_{h,f_h}(x)$ formally, where $\Phi_{h,f}(x)$ is the numerical solution of (\ref{eq:ODE}) and $\phi_{h,f_h}(x)$ is the exact solution of (\ref{eq:mde}). Expanding $\phi_{h,f_h}(x)$ and $\Phi_{h,f}$ into power series of $h$ and comparing equal powers yields recursion formulas for $f_k$. Refer to \cite[Section 9]{hairer2006geometric} for detailed computation.

Modified integrator is an approach for constructing high order methods via modified differential equations \cite{chartier2007numerical}. The idea is to find a perturbed differential equation
\begin{equation}\label{eq:imde}
\frac{d}{dt}\tilde{y}(t)=f_h(\tilde{y}(t))  = f_0(\tilde{y})+hf_1(\tilde{y})+h^2f_2(\tilde{y})+\cdots,
\end{equation}
such that $\Phi_{h,f_h}(x) = \phi_{h,f}(x)$ formally. Here, the identity is understood in the sense of the formal power series in $h$.

For implementation, we first expand the numerical solution,
\begin{equation}\label{eq:numsolu}
\Phi_{h,f_h}(x) = x + hd_{1,f_h}(x) + h^2d_{2,f_h}(x) + \cdots,
\end{equation}
where the functions $d_{j,f_h}$ are given and typically composed of $f_h$ and its derivatives. For consistent integrators,
\begin{equation*}
d_{1,f_h}(x) = f_h(x) = f_0(x)+hf_1(x)+h^2f_2(x)+\cdots.
\end{equation*}
In $h^id_{i,f_h}(x) $, the powers of $h$ of the terms containing $f_k$ is at least $k+i$. Thus the coefficients of $h^{k+1}$ in (\ref{eq:numsolu}) is
\begin{equation*}
f_k+ \cdots,
\end{equation*}
where the ``$\cdots$'' indicates residual terms composed of $f_j$ with $j\leq k-1$ and their derivatives. By comparison of the coefficients of  like powers of $h$ in (\ref{eq:exasolu}) and (\ref{eq:numsolu}), unique functions $f_k$ in (\ref{eq:imde}) are obtained recursively. In particular, for a method of order $p$, the functions $f_1, \cdots f_{p-1}$ vanish identically.
\begin{theorem}\label{the:modiode}
Suppose that the integrator $\Phi_{h}(x)$ with discrete step $h$ is of order $p\geq 1$, more precisely,
\begin{equation*}\label{eq:truncation}
\Phi_{h,f}(x)=\phi_{h,f}(x)+h^{p+1}\delta_f(x)+O(h^{p+2}),
\end{equation*}
where $h^{p+1}\delta_f(x)$ is the leading term of the local truncation applied to (\ref{eq:ODE}). Then, the IMDE obeys
\begin{equation*}
\frac{d}{dt}\tilde{y}=f_h(\tilde{y})=f(\tilde{y})+h^pf_{p}(\tilde{y})+\cdots,
\end{equation*}
where $f_{p}(y)=-\delta_f(y)$.
\end{theorem}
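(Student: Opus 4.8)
The plan is to work entirely in the ring of formal power series in $h$ and to match, order by order, the two sides of the defining relation $\Phi_{h,f_h}(x)=\phi_{h,f}(x)$. Writing $[\,\cdot\,]_{k}$ for the coefficient of $h^{k}$ in a formal series, this relation reads $[\Phi_{h,f_h}]_{k}=[\phi_{h,f}]_{k}$ for every $k$, and I would derive all three assertions of the theorem—$f_0=f$, the vanishing $f_1=\cdots=f_{p-1}=0$, and $f_p=-\delta_f$—from a single induction on $k$. The base case is just consistency: since only $h\,d_{1,f_h}$ contributes to order $h$ and $d_{1,f_h}=f_h=f_0+hf_1+\cdots$, one has $[\Phi_{h,f_h}]_{1}=f_0$, forcing $f_0=[\phi_{h,f}]_{1}=f$.

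The engine of the induction is a structural identity for the higher coefficients. Substituting $f_h=f+hf_1+h^2f_2+\cdots$ into each $d_{i,f_h}$ and expanding, the ``pure'' part is $d_{i,f}$, and summing these reproduces $\Phi_{h,f}$ exactly; every other term contains a factor $f_j$ with $j\ge 1$. By the degree count noted above—a term of $h^{i}d_{i,f_h}$ containing $f_j$ carries $h$-power at least $i+j$—such a correction sits at order $h^{i+j}$ or higher. Assuming inductively that $f_1=\cdots=f_{k-1}=0$ for some $k\ge 1$, any surviving correction has $j\ge k$, so $i+j\ge k+1$ with equality only at $i=1,\ j=k$; that lone term is the piece $h\cdot h^{k}f_k$ contributed by $d_{1,f_h}=f_h$. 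Hence, for every $k\ge 1$,
\begin{equation*}
[\Phi_{h,f_h}]_{k+1}=[\Phi_{h,f}]_{k+1}+f_k .
\end{equation*}

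Running the induction is then routine. For $1\le k\le p-1$, the order-$p$ hypothesis gives $\Phi_{h,f}=\phi_{h,f}+O(h^{p+1})$, so $[\Phi_{h,f}]_{k+1}=[\phi_{h,f}]_{k+1}$; combined with $[\Phi_{h,f_h}]_{k+1}=[\phi_{h,f}]_{k+1}$ the displayed identity forces $f_k=0$, which also validates the inductive hypothesis for the next step. Finally, at $k=p$ I would use the sharp truncation $\Phi_{h,f}=\phi_{h,f}+h^{p+1}\delta_f+O(h^{p+2})$, giving $[\Phi_{h,f}]_{p+1}=[\phi_{h,f}]_{p+1}+\delta_f$; substituting into the identity leaves $0=\delta_f+f_p$, i.e.\ $f_p=-\delta_f$.

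I expect the only delicate point to be the bookkeeping behind the displayed identity: one must certify that, once $f_1,\dots,f_{k-1}$ vanish, no cross term built from $f$ and the higher coefficients reaches order $h^{k+1}$ except the explicit $h^{k+1}f_k$ from $d_{1,f_h}$, and that the pure-$f$ contributions reassemble into $\Phi_{h,f}$ without remainder. This is exactly what the estimate ``$h$-power $\ge i+j$ for any term containing $f_j$'' secures, so I would state it as a short lemma and let each induction step collapse to a single coefficient comparison. No analytic input is required, since at this stage the IMDE lives only as a formal series and questions of convergence and truncation are postponed to the later sections.
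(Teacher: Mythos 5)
Your proof is correct and takes essentially the same approach as the paper: the paper's one-line proof invokes the recursion $f_k = \lim_{h\to 0}\bigl(\phi_{h,f}(x) - \Phi_{h,f_h^{k-1}}(x)\bigr)/h^{k+1}$, which rests on precisely the coefficient-matching and degree-count argument ($h$-power at least $i+j$ for any term of $h^i d_{i,f_h}$ containing $f_j$) that you spell out, together with the same induction yielding $f_1 = \cdots = f_{p-1} = 0$ and then $f_p = -\delta_f$ from the leading truncation term. Your write-up simply makes explicit the bookkeeping that the paper compresses into its description of the modified-integrator construction in Section \ref{mde}.
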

\begin{proof}
The function $f_k$ is obtained from
\begin{equation*}
\Phi_{h,f_h^k} = \phi_{h,f} - h^{k+1}f_{k}+O(h^{k+2}),
\end{equation*}
which concludes the proof.
\end{proof}

Denote the truncation of series in (\ref{eq:imde}) as
\begin{equation*}
f_h^K(y) = \sum_{k=0}^K h^k f_k(y).
\end{equation*}
The above computation procedure implies that
\begin{equation*}
\Phi_{h,f_h^K}(x)=\phi_{h,f}(x)+ O(h^{K+2}),
\end{equation*}
which defines a numerical method of order $K+1$ for (\ref{eq:ODE}).

\subsection{ODE based neural networks}\label{sec:discovery}
The discovery of dynamics is essentially a process of identifying the unknown vector field $f$ (also known as dynamics)  of a dynamical system (\ref{eq:ODE}) using provided information of the flow map on given phase points (typically are the states at equidistant time steps of a trajectory, and are written as $\{(x_i,\phi_{T,f}(x_i))\}_{i=1}^I$  in this paper for generality). In this paper, we assume the state set is exact.

Below we briefly recall existing data-driven discovery models using neural network, including ODE-nets, linear multistep neural networks and Hamiltonian neural networks.

\subsubsection{ODE-nets}
Recently, neural ODE \cite{chen2018neural} is proposed as a continuous model by embedding a neural network into an ODE solver. Before the introduction of neural ODE, there were multiple pioneering efforts combining neural networks and ODE solver to discovery the hidden dynamics \cite{anderson1996comparison,gonzalez1998identification,rico1994continuous,rico1993continuous}. In the literature, these models are known as ODE-nets. Using such models, the governing function $f$ is approximated by neural networks via optimizing
\begin{equation}\label{eq:odenloss}
\inf_{u \in \Gamma}  \int_{\mathcal{X}} l(\text{ODESolve}(x, u, T), \phi_{T,f}(x))d P(x).
\end{equation}
Here $l(\cdot,\cdot)$ is a loss function that is minimized when its two arguments are equal (a common choice for regression problem is the square loss $l(y,\hat{y})= \|y-\hat{y}\|_2^2$). $P(x)$ is a probability measure on $\mathcal{X}$ modelling the input distribution which is unknown in practice. In the setting of discovery, we sample training data $\{(x_i,\phi_{T,f}(x_i))\}_{i=1}^N$ and set $P(x)$ to be the empirical measure $P = \sum_{i=1}^I I^{-1} \delta_{x_i}$, yielding the empirical risk optimization problem
\begin{equation*}
\inf_{u \in \Gamma} \frac{1}{I}\sum_{i=1}^I l(\text{ODESolve}(x_i, u, T), \phi_{T,f}(x_i)).
\end{equation*}
We denote the obtained neural network as $f_{net}$. The desired purpose is that $f_{net}$ achieves small loss in the unknown data.
Neural network framework generalize well in practice, although its performance has not been complete explained by most existing theoretical works.

\subsubsection{Linear multistep neural networks}

Linear multistep neural networks (LMNets), developed in \cite{raissi2018multistep}, apply linear multistep methods and neural networks to discovery of dynamics provided given state $y$ on a trajectory at equidistant steps. For LMNets, the unknown $f$ is replaced by neural networks in (\ref{eq:LMM}) and is learned by solving the optimization problem
\begin{equation*}
\inf_{u\in \Gamma} \sum_{i=0}^{I-M}\norm{\sum_{m=0}^M h^{-1}\alpha_my_{i+m} - \sum_{m=0}^M\beta_mu(y_{i+m})}_2^2,
\end{equation*}
where $\Gamma$ is the set of neural networks, $y_i = y(ih)$ with $i=0,\cdots, I$ are the given temporal data-snapshots.

\subsubsection{Hamiltonian neural networks}
Although ODE-nets have remarkable abilities to learn and generalize from data, a vast amount of prior knowledge have not been well utilized. Encoding prior information into a learning algorithm has attracted increasing attention recently \cite{jin2020sympnets, lu2021learning,qin2020machine}. In this paper, we will investigate Hamiltonian neural networks (HNN) \cite{bertalan2019learning,greydanus2019hamiltonian}, in which the unknown Hamiltonian function $H$ instead of the total vector field $f$ is parameterized.

The methodology of HNN is to represent the Hamiltonian $H(y)$ by neural network $u$ and compute $J^{-1} \nabla u(y)$ via auto-differentiation. Subsequently, the approximation is obtained within ODE-net framework, i.e., solving the optimization problem
\begin{equation*}
\inf_{u \in \Gamma}  \frac{1}{I} \sum_{i=1}^I\norm{\text{ODESolve}(x_i, J^{-1} \nabla u, T)-\phi_{T,J^{-1} \nabla H}(x_i)}_2^2,
\end{equation*}
where $\Gamma$ is the set of neural networks. There have been many
research work focusing on HNN with symplectic integration \cite{chen2020symplectic,tong2021symplectic,xiong2021nonseparable}, this problem will be documented in detail later.

\section{Inverse modified differential equations}\label{sec:IMDE}

Consider a very idealized assumption: the neural networks produce zero loss for complete data, i.e., \begin{equation*}
\phi_{T,f}(x) = \text{ODESolve}(x, f_{net}, T).
\end{equation*}
Meanwhile, an ODE solver can be regarded as a one-step integrator with discrete step $T$. Following the procedure of modified integrator in subsection \ref{mde}, we derive a perturbed equation,
\begin{equation*}
\frac{d}{dt}\tilde{y}(t)=F_T(\tilde{y}(t))
\end{equation*}
such that formally
\begin{equation*}
\phi_{T,f}(x) = \text{ODESolve}(x, F_{T}, T).
\end{equation*}
Thus it is natural to expect that training an ODE-net returns an approximation of $F_T$. Similar discussion holds for LMNets and HNN.  In this paper, we name the perturbed equation (\ref{eq:imde}) as inverse modified differential equation (IMDE), since it is used for analysis of discovery. We will introduce the IMDE corresponding to the aforementioned learning models in this section and present rigorous analysis in next section.

\subsection{Inverse modified differential equations for ODE-net}
Detailed computation procedure of IMDE for one-step integrator has been presented in subsection \ref{mde}. Recall that the ODE solver is fixed $S$ compositions of a integrator $\Phi_{h}$.
The following theorem indicates that the IMDE of $\Phi_h$ coincides with the IMDE of the ODE solver.
\begin{theorem}\label{the:inmde}
For any fixed composition number $S$ and truncation index $K$, there exist unique h-independent functions $f_k$ for $0 \leq k \leq K$ such that, the numerical solution of
\begin{equation*}
\frac{d}{dt}\tilde{y}=f_h^K(\tilde{y})=\sum_{k=0}^K h^kf_k(\tilde{y}),
\end{equation*}
satisfies
\begin{equation*}
\Phi_{h,f_h^K}(x)=\phi_{h,f}(x)+ O(h^{K+2})
\end{equation*}
and
\begin{equation*}
\brac{\Phi_{h,f_{h}^K}}^S (x) = \phi_{Sh,f}(x)+ O(h^{K+2})
\end{equation*}
for arbitrary initial value $x$.
\end{theorem}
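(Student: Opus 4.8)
The plan is to split the statement into two logically distinct claims and prove them in sequence: first, the existence, uniqueness, and $h$-independence of the coefficients $f_0,\dots,f_K$ together with the single-step estimate $\Phi_{h,f_h^K}(x)=\phi_{h,f}(x)+O(h^{K+2})$; and second, the propagation of this single-step estimate through $S$ compositions to obtain $(\Phi_{h,f_h^K})^S(x)=\phi_{Sh,f}(x)+O(h^{K+2})$. The conceptual content worth isolating is that no separate construction is needed for the composed solver---the single-step IMDE already does the job.

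For the first claim I would appeal directly to the modified integrator construction recalled in Subsection~\ref{mde}. Expanding the numerical flow as $\Phi_{h,f_h}(x)=x+hd_{1,f_h}(x)+h^2 d_{2,f_h}(x)+\cdots$ as in (\ref{eq:numsolu}) and the exact flow $\phi_{h,f}$ as in (\ref{eq:exasolu}), both as formal power series in $h$, I would equate coefficients of equal powers. The decisive structural observation is that, for a consistent integrator, $f_k$ first enters the coefficient of $h^{k+1}$ through $d_{1,f_h}=f_h$ with coefficient one, so this coefficient equals $f_k$ plus terms assembled solely from $f_0,\dots,f_{k-1}$ and their derivatives. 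This yields a triangular recursion that determines each $f_k$ uniquely and manifestly independently of $h$. Truncating at $k=K$ then gives $\Phi_{h,f_h^K}(x)=\phi_{h,f}(x)+O(h^{K+2})$, the order-$(K+1)$ single-step identity.

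The substance of the theorem lies in the second claim, and here I would exploit the group property of the autonomous exact flow, $\phi_{Sh,f}=(\phi_{h,f})^S$, to reduce the $S$-step statement to iterating the single-step estimate. Writing $\Psi=\Phi_{h,f_h^K}$ and $\psi=\phi_{h,f}$, so that $\Psi-\psi=O(h^{K+2})$ uniformly on the bounded region traversed by the trajectory over $[0,Sh]$, I would telescope
\[
\Psi^S(x)-\psi^S(x)=\sum_{j=0}^{S-1}\Bigl[\Psi^{S-1-j}\bigl(\Psi(\psi^{j}(x))\bigr)-\Psi^{S-1-j}\bigl(\psi(\psi^{j}(x))\bigr)\Bigr].
\]
Each summand is the image under $\Psi^{S-1-j}$ of two arguments that differ by $\Psi(\psi^j(x))-\psi(\psi^j(x))=O(h^{K+2})$, so a Lipschitz estimate for $\Psi^{S-1-j}$ controls it.

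The main obstacle to monitor is that the error must not be amplified by the repeated compositions. Since $\Psi$ is a small perturbation of the identity its Jacobian is $I+O(h)$, so on a suitable neighborhood its Lipschitz constant is $1+O(h)$; composing it at most $S-1$ times contributes a factor $(1+O(h))^{S-1}$, which stays bounded because $S$ is fixed. Hence each of the $S$ telescoped terms is $O(h^{K+2})$ and their sum is again $O(h^{K+2})$. Combining this with the group property $\psi^S=\phi_{Sh,f}$ yields $(\Phi_{h,f_h^K})^S(x)=\phi_{Sh,f}(x)+O(h^{K+2})$, completing the argument.
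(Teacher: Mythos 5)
Your proposal is correct, and its first half is the same as the paper's: the existence, uniqueness, and $h$-independence of $f_0,\dots,f_K$ together with the single-step estimate come from the triangular coefficient comparison of Subsection \ref{mde}, which the paper encodes compactly as the recursion (\ref{recursion}) in Appendix \ref{sec:inmde}. For the composition estimate you take a genuinely different route: a standard global-error-propagation argument (telescoping sum plus the Lipschitz bound $(1+O(h))^{S-1}$ for the iterated numerical map), which gives $(\Phi_{h,f_h^K})^S(x)-\phi_{Sh,f}(x)=O(h^{K+2})$ in one stroke and settles uniqueness via the single-step condition alone. The paper instead runs a double induction: first on $S$, proving the sharper identity $\phi_{Sh,f}-(\Phi_{h,f_h^{k-1}})^S = Sh^{k+1}f_k+O(h^{k+2})$ (equation (\ref{Srecursion})) --- essentially your telescoping unrolled one step at a time while retaining the exact leading coefficient --- and then on $k$, concluding that the IMDE coefficients $F_k$ of the composed solver, viewed as a one-step method with step $Sh$, satisfy $S^kF_k=f_k$. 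What the paper's longer route buys is precisely this identification: the IMDE of the ODE solver coincides with the IMDE of the underlying integrator, which is the statement advertised before the theorem and the form in which it is used in the proof of Theorem \ref{thm:error} (where Lemma \ref{lem:error} is applied to the solver regarded as a single step of size $T=Sh$). Your argument proves the theorem exactly as stated and is more elementary; if the identification $S^kF_k=f_k$ were needed, you could still recover it by combining your composed estimate with the uniqueness of the composed method's own IMDE coefficients, but that deduction should then be made explicit.
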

\begin{proof}
The proof can be found in Appendix \ref{sec:inmde}.
\end{proof}

\subsection{Inverse modified differential equations for LMNet}
According to Theorem \ref{the:step-transition operator}, the formal expansion of $\Phi_{h,f_h}(x)$ for a linear multistep method also exisits, thus IMDE computation can be directly applied to step-transition operators. Using Lie derivatives, we introduce a new approach to derive explicit recursion of IMDE directly from the multistep formula (\ref{eq:LMM}).

\begin{theorem}\label{the:inmde_lmnet}
Consider a weakly stable and consistent multistep method (\ref{eq:LMM}), there exist unique h-independent functions $f_k$ for $0 \leq k \leq K$ such that $f_h^K =\sum_{k=0}^K h^kf_k$ satisfies
\begin{equation}\label{eq:multistep}
\sum_{m=0}^M \alpha_m\phi_{mh,f}(x) = h\sum_{m=0}^M\beta_mf_h^K(\phi_{mh,f}(x)) +O(h^{K+2})
\end{equation}
for arbitrary initial value $x$. In particular, for $k \geq 0$, the functions $f_{k}$ are given as
\begin{equation}\label{eq:multistep inmeq}
\begin{aligned}
f_k(y)=&\frac{1}{(\sum_{m=0}^M\beta_m)} \sum_{m=0}^M \alpha_m \frac{m^{k+1}}{(k+1)!}(D^{k}f)(y)\\
&-\frac{1}{(\sum_{m=0}^M\beta_m)}\sum_{m=0}^M\beta_m\sum_{j=1}^k\frac{m^j}{j!}(D^jf_{k-j})(y).
\end{aligned}
\end{equation}
\end{theorem}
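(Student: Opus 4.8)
\emph{Strategy.} The plan is to expand both sides of the target identity \eqref{eq:multistep} as formal power series in $h$ by means of the Lie derivative expansion \eqref{eq:Ld}, and then to match coefficients of equal powers of $h$. Since each $f_k$ enters the right-hand side with a leading factor $h$ (from the overall $h\sum_m\beta_m(\cdot)$), the coefficient of $h^{n+1}$ on the right will contain the single unknown $f_n$ linearly, together with terms involving only $f_0,\dots,f_{n-1}$. This triangular structure is what will deliver, in one stroke, both the existence and uniqueness of the $h$-independent functions $f_k$ and their closed form \eqref{eq:multistep inmeq}. Everything is carried out in the ring of formal power series, so no convergence question arises at this stage, and truncating the matching at level $K$ is exactly what produces the stated remainder $O(h^{K+2})$.

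\emph{Left-hand side.} First I would apply \eqref{eq:Ld} with $g=I_N$ the identity map, using $D^kI_N=D^{k-1}f$ for $k\ge1$, to obtain $\phi_{mh,f}(x)=x+\sum_{k\ge1}\frac{(mh)^k}{k!}(D^{k-1}f)(x)$. Summing against $\alpha_m$, the constant term is $\brac{\sum_{m=0}^M\alpha_m}x$; the consistency of the method forces $\sum_{m=0}^M\alpha_m=0$ (indeed this is already required by the $h^0$ matching, since the right-hand side carries a factor $h$), so that term vanishes and the series starts at $h^1$. Collecting the coefficient of $h^{n+1}$ then gives $\frac{1}{(n+1)!}\brac{\sum_{m=0}^M\alpha_m m^{n+1}}(D^nf)(x)$, which is precisely the first term of \eqref{eq:multistep inmeq} after dividing by $\sum_{m}\beta_m$.

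\emph{Right-hand side.} Next I would apply \eqref{eq:Ld} with $g=f_k$ to expand $f_k(\phi_{mh,f}(x))=\sum_{j\ge0}\frac{(mh)^j}{j!}(D^jf_k)(x)$, multiply by $h^{k+1}\beta_m$, and sum over $m$ and $0\le k\le K$. Collecting the coefficient of $h^{n+1}$ amounts to fixing $k+j=n$. Isolating the contribution $k=n$, $j=0$ produces exactly $\brac{\sum_{m=0}^M\beta_m}f_n(x)$, while the remaining terms, after the reindexing $j=n-k$, become $\sum_{m=0}^M\beta_m\sum_{j=1}^n\frac{m^j}{j!}(D^jf_{n-j})(x)$ and involve only $f_0,\dots,f_{n-1}$.

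\emph{Matching and the main obstacle.} Equating the two coefficients of $h^{n+1}$ for each $0\le n\le K$ and solving the resulting linear equation for $f_n$ yields \eqref{eq:multistep inmeq}; uniqueness is then automatic, since each $f_n$ is pinned down by the preceding functions. The one point that must be secured for this solve --- and the main obstacle --- is that the leading factor $\sum_{m=0}^M\beta_m$ is nonzero, so that $f_n$ is well defined. This is where the hypotheses enter: consistency gives $\sum_{m=0}^M m\alpha_m=\sum_{m=0}^M\beta_m$, while weak stability \eqref{eq:weak stability} gives $\sum_{m=0}^M m\alpha_m\neq0$, whence $\sum_{m=0}^M\beta_m\neq0$. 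The same two facts also settle the base case: for $n=0$ the recursion reduces to $f_0=\frac{\sum_m m\alpha_m}{\sum_m\beta_m}\,f=f$, confirming that the IMDE is a genuine perturbation of the original vector field.
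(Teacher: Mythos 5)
Your proposal is correct and follows essentially the same route as the paper's proof: expanding both sides of \eqref{eq:multistep} via the Lie-derivative series \eqref{eq:Ld}, matching coefficients of like powers of $h$ to get a triangular recursion for the $f_k$, and using consistency together with weak stability \eqref{eq:weak stability} to guarantee $\sum_{m=0}^M\beta_m\neq 0$ so the solve is well defined. The only cosmetic differences are that you substitute $D^kI_N=D^{k-1}f$ at the outset rather than at the end, and you explicitly verify the base case $f_0=f$.
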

Here, the consistency requires
\begin{equation*}
\sum_{m=0}^M \alpha_m =0,\ \sum_{m=0}^M m\cdot \alpha_m =\sum_{m=0}^M\beta_m,
\end{equation*}
which yields $\sum_{m=0}^M\beta_m\neq 0$ due to the weak stability condition (\ref{eq:weak stability}).

\begin{proof}
The proof can be found in Appendix \ref{sec:inmde_lmnet}.
\end{proof}

\subsection{Learning Hamiltonian system and HNN}\label{sec:Hamiltonian neural networks}

For Hamiltonian system
\begin{equation*}
\frac{d}{dt}y = J^{-1} \nabla H(y),
\end{equation*}
applying Theorem \ref{the:inmde} yields a unique IMDE such that formally
\begin{equation*}
\phi_{T,J^{-1} \nabla H}(x)= \text{ODESolve}(x, f_h, T).
\end{equation*}
Therefore, learning Hamiltonian system, or conservation law, requires the IMDE to be a Hamiltonian system, i.e., $J f_h$ is a potential field. This is true when the numerical integrator used in ODE-net is symplectic.
\begin{theorem}\label{the:modiHam}
Consider a Hamiltonian system with a smooth Hamiltonian $H$, if the numerical integrator $\Phi_h$ is symplectic, then its IMDE is also a Hamiltonian system, i.e., there locally exist smooth functions $H_k$, $k=0,1,2\cdots$, such that
\begin{equation*}
f_k(y)=J^{-1}\nabla H_k(y).\end{equation*}
\end{theorem}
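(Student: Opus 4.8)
The plan is to argue by induction on the index $k$, showing that each coefficient $f_k$ of the IMDE is a Hamiltonian vector field. The base case is immediate: since $\Phi_h$ is consistent, the leading term of the IMDE is $f_0 = f = J^{-1}\nabla H$, so I may take $H_0 = H$. For the inductive step I assume that $f_0,\dots,f_r$ are all Hamiltonian, say $f_j = J^{-1}\nabla H_j$ with $H_j$ smooth on a common star-shaped neighbourhood, and I set out to produce $H_{r+1}$.

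The engine of the argument is that two maps in sight are symplectic. First, the truncated field $f_h^{[r]} := \sum_{j=0}^r h^j f_j = J^{-1}\nabla\big(\sum_{j=0}^r h^j H_j\big)$ is Hamiltonian for every fixed $h$; because $\Phi_h$ is a symplectic integrator, the one-step map $\Phi_{h,f_h^{[r]}}$ is therefore symplectic. Second, $\phi_{h,f}$ is the exact flow of the Hamiltonian system and is symplectic by Poincar\'e's theorem. By the truncation property established in the IMDE construction (Theorem \ref{the:inmde}), these two maps agree up to $O(h^{r+2})$, so I can write $\Phi_{h,f_h^{[r]}}(x) = \phi_{h,f}(x) + h^{r+2} e_{r+1}(x) + O(h^{r+3})$ for a fixed, $h$-independent function $e_{r+1}$. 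A one-line perturbation estimate — adding $h^{r+1}f_{r+1}$ to the field changes the consistent one-step map by $h^{r+2}f_{r+1} + O(h^{r+3})$, since the $d_1$ term is the field itself — identifies the next IMDE coefficient as $f_{r+1} = -e_{r+1}$, so it suffices to prove that $e_{r+1}$ is Hamiltonian.

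The crux is to convert symplecticity of the two maps into a structural constraint on $e_{r+1}$. Writing $A_h = \Phi_{h,f_h^{[r]}}$ and $B_h = \phi_{h,f}$, I differentiate the expansion to get $A_h' = B_h' + h^{r+2} e_{r+1}' + O(h^{r+3})$, substitute into the symplecticity identities $A_h'^{T} J A_h' = J$ and $B_h'^{T} J B_h' = J$, subtract, and read off the coefficient of $h^{r+2}$. Using $B_0' = I$ this collapses to $e_{r+1}'^{T} J + J e_{r+1}' = 0$, which says precisely that the Jacobian of $J e_{r+1}$ is symmetric. Poincar\'e's lemma then yields, locally, a smooth scalar $E_{r+1}$ with $J e_{r+1} = \nabla E_{r+1}$, i.e. $e_{r+1} = J^{-1}\nabla E_{r+1}$; setting $H_{r+1} = -E_{r+1}$ gives $f_{r+1} = J^{-1}\nabla H_{r+1}$ and closes the induction. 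This also explains the word \emph{locally} in the statement, as the potential $E_{r+1}$ exists only on simply connected (e.g. star-shaped) subdomains.

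I expect the main obstacle to be the careful bookkeeping in the symplecticity step, where $f_h^{[r]}$ carries its own dependence on $h$ in addition to the step size $h$: I must be sure that $\Phi_{h,f_h^{[r]}}$ really is symplectic for each fixed $h$ (so that $A_h'^{T} J A_h' = J$ holds identically in $h$ and may be differentiated and expanded term by term), and that only the $h^0$ part $B_0' = I$ survives when isolating the $h^{r+2}$ coefficient. The remaining ingredients — consistency for the base case, the perturbation identity $f_{r+1} = -e_{r+1}$, and the passage from a symmetric Jacobian to a gradient — are routine once this is set up.
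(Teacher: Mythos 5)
Your proposal is correct and follows essentially the same route as the paper's own proof: induction on $k$, using the inductive hypothesis to make the truncated IMDE Hamiltonian so that $\Phi_{h,f_h^{K}}$ is symplectic, comparing the symplecticity identities of this map and of the exact flow $\phi_{h,f}$ at order $h^{K+2}$ (with $\Phi_{h,f_h^{K}}'=I+O(h)$) to conclude that $Jf_{K+1}'$ is symmetric, and then invoking the local integrability (Poincar\'e) lemma to obtain $H_{K+1}$. Your sign bookkeeping $f_{r+1}=-e_{r+1}$ is exactly the paper's recursion for the IMDE coefficients, so the two arguments coincide.
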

\begin{proof}
This statement has been found in \cite{chartier2007numerical}. We provide a complete proof
in Appendix \ref{sec:modiHam}.
\end{proof}

Non-symplectic numerical integrator can not guarantee that its IMDE is always a Hamiltonian system. Thus ODE-net using non-symplectic integrators and LMNet tend not to learn conservation laws. We remark that both linear multistep method and explicit Runge-Kutta method can not be always symplectic \cite{hairer2006geometric,tang1993symplecticity}. This statement was discussed in \cite{greydanus2019hamiltonian}, while IMDE reveal this problem theoretically.


Furthermore, Theorem \ref{the:modiHam} also reveals the behaviour of HNN. It indicates that HNN with symplectic integrator have certain approximation target. On the contrary, using non-symplectic integrators in HNN can lead to excessive loss and uncertain results dominated by data distribution.

\subsection{Discussion on uniqueness}
We consider the differential equation
\begin{equation*}
\begin{aligned}
&\frac{d}{dt}p=a, \\
&\frac{d}{dt}q=\sin{(p+b)},
\end{aligned}
\end{equation*}
with parameters $a, b$ and initial value $(p(0),q(0))=(p_0,q_0)$. The exact solution is given as
\begin{equation*}
\begin{aligned}
&p(t)=p_0+at, \\
&q(t)=q_0-\frac{1}{a}(\cos{(p_0+at+b)}-\cos{(p_0+b)}).
\end{aligned}
\end{equation*}
When $t=\frac{2\pi}{a}$, we have
\begin{equation*}
\begin{aligned}
&p=p_0+2\pi, \\
&q=q_0.
\end{aligned}
\end{equation*}
Thus, same exact solutions are obtained although the parameter $b$ is different.

In addition, consider a linear equation
\begin{equation*}
\frac{d}{dt} p = \lambda p
\end{equation*}
with parameter $\lambda$. Applying explicit Euler method twice yields
\begin{equation*}
p_1 = (1+\lambda h)^2 p_0 = (1+ (-2/h-\lambda)h)^2p_0.
\end{equation*}
Same numerical solutions are obtained for parameter $\lambda$ and $(-2/h-\lambda)$.

The above examples indicate non-uniqueness of the solution even though $f$ is smooth. We need additional assumptions for rigorous analysis. These problems will be discussed in next section.

\section{Error analysis for discovery using ODE-net}\label{sec:error}
To begin with, we introduce some notations. For a compact subset $\mathcal{K} \subset \C^N$, let $\mathcal{B}(x,r) \subset \C^N$ be the complex ball of radius $r>0$ centered at $x\in\C^N$ and define
\begin{equation*}
\mathcal{B}(\mathcal{K}, r) = \bigcup_{x \in \mathcal{K}} \mathcal{B}(x,r).
\end{equation*}
We will work with $l_{\infty}$- norm on $\C^N$, denote $\norm{\cdot} = \norm{\cdot}_{\infty}$, and for a real analytic vector field $f$, define
\begin{equation*}
\norm{f}_{\mathcal{K}} = \sup_{x\in\mathcal{K}}\norm{f(x)}.
\end{equation*}
Now, the main theorem is given as follows.

\begin{theorem}\label{thm:error}
For $x \in \R^N$ and $r_1, r_2 >0$, a given ODE solver that is $S$ compositions of a $p$th-order Runge-Kutta method $\Phi_{h}$, we denote
\begin{equation*}
\mathcal{L} = \norm{ \text{ODESolve}(\cdot, f_{net}, T) -\phi_{Sh,f}(\cdot)}_{\mathcal{B}(x, r_1)}/T,
\end{equation*}
and suppose the target vector field $f$ and the learned vector field $f_{net}$ are real analytic and bounded by $m$ on $\mathcal{B}(x,r_1+r_2)$, i.e.,
\begin{equation}\label{con:funcbound}
\norm{f}_{\mathcal{B}(x,r_1+r_2)}  \leq m, \ \norm{f_{net}}_{\mathcal{B}(x,r_1+r_2)}  \leq m.
\end{equation}
Then, there exist integer $K=K(T)$ and constants $T_0$, $q$, $\gamma$, $c_1$, $c_2$, $C$ that depend on $m$, $r_1$, $r_2$ and the ODE solver, such that, if $0<T<T_0$,
\begin{equation*}
\begin{aligned}
&\norm{f_{net}(x) - f_h^K(x)} \leq c_1 e^{-\gamma/T^{1/q}} + C \mathcal{L},\\
&\norm{f_{net}(x) - f(x)}\leq  c_2h^p + C\mathcal{L},
\end{aligned}
\end{equation*}
where $h=T/S$ and $f_h^K$ is the truncated vector field of IMDE of $\Phi_{h,f}$.
\end{theorem}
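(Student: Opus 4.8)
The plan is to reduce the two inequalities to a single \emph{inversion} estimate together with a quantitative version of the truncation in Theorem~\ref{the:inmde}. Writing $e := f_{net}-f_h^K$ and abbreviating the solver map as $\Xi_{T,g}:=(\Phi_{h,g})^S=\text{ODESolve}(\cdot,g,T)$, the starting point is the triangle inequality on $\mathcal{B}(x,r_1)$,
\[
\norm{\Xi_{T,f_{net}}-\Xi_{T,f_h^K}}_{\mathcal{B}(x,r_1)}\le\norm{\Xi_{T,f_{net}}-\phi_{Sh,f}}_{\mathcal{B}(x,r_1)}+\norm{\phi_{Sh,f}-\Xi_{T,f_h^K}}_{\mathcal{B}(x,r_1)}\le T\mathcal{L}+\mathcal{E}_{\mathrm{trunc}},
\]
where the first term equals $T\mathcal{L}$ by the definition of $\mathcal{L}$ and $\mathcal{E}_{\mathrm{trunc}}$ is the IMDE truncation error of Theorem~\ref{the:inmde}. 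The second inequality of the theorem will then follow from the first: since the method has order $p$, Theorem~\ref{the:modiode} gives $f_h^K-f=\sum_{k\ge p}h^kf_k=O(h^p)$, so $\norm{f_h^K(x)-f(x)}\le c'h^p$, while the term $c_1e^{-\gamma/T^{1/q}}=c_1e^{-\gamma'/h^{1/q}}$ is dominated by $h^p$ once $T<T_0$; hence $\norm{f_{net}(x)-f(x)}\le\norm{f_{net}(x)-f_h^K(x)}+\norm{f_h^K(x)-f(x)}\le c_2h^p+C\mathcal{L}$.

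The first ingredient is to make the truncation quantitative. Under the analyticity hypothesis I would run the recursion defining the $f_k$ (Subsection~\ref{mde} and Theorem~\ref{the:inmde}) through Cauchy estimates on a shrinking family of balls $\mathcal{B}(x,r_1+r_2-k\sigma)$, obtaining Gevrey-type growth $\norm{f_k}\le CM^k(k!)^{q}$ with $M,q$ determined by $m$, $r_2$ and the Runge--Kutta coefficients. Optimal truncation---taking $K=K(T)$ near the index minimizing $h^{K+1}\norm{f_K}$---then yields $\mathcal{E}_{\mathrm{trunc}}\le T c_1 e^{-\gamma/T^{1/q}}$, the sub-exponential bound. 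This is the backward-error-analysis argument of \cite{benettin1994hamiltonian,hairer1999backward}, here tailored to the inverse problem.

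The technical heart is the \emph{inversion}: passing from smallness of $\Xi_{T,f_{net}}-\Xi_{T,f_h^K}$ back to smallness of $e(x)$. Telescoping $\Xi_{T,g_1}-\Xi_{T,g_2}=\sum_{j=0}^{S-1}\Phi_{h,g_2}^{\,j}\circ(\Phi_{h,g_1}-\Phi_{h,g_2})\circ\Phi_{h,g_1}^{\,S-1-j}$, and using consistency $\Phi_{h,g}(y)=y+hg(y)+O(h^2)$ together with $\mathrm{Lip}(\Phi_{h,g_2}^{\,j})=1+O(T)$, I would extract the leading identity
\[
\Xi_{T,f_{net}}(x)-\Xi_{T,f_h^K}(x)=T\,e(x)+\mathcal{R}(x),
\]
where the elementary-differential structure of Theorem~\ref{the:RKexp} makes the remainder $\mathcal{R}$ of higher order in $T$ and linear in $(e,e')$ with coefficients bounded by $m$. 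The obstacle is that a crude bound gives only $\norm{\mathcal{R}}\lesssim T\norm{e}_{\mathcal{B}(x,r_1+r_2)}$, which is too weak, since the a priori size of $e$ is merely $O(m)$; closing the estimate to the clean form $\norm{e(x)}\le \tfrac{C}{T}\norm{\Xi_{T,f_{net}}-\Xi_{T,f_h^K}}_{\mathcal{B}(x,r_1)}$ requires \emph{absorbing} $\mathcal{R}$ rather than bounding it. I would do this by reading the relation as the invertibility, on a scale of analytic function spaces graded by the radius $r_2$, of the map $g\mapsto\Xi_{T,g}$, whose linearization is $T\cdot\mathrm{Id}+O(T^2)$ and is therefore boundedly invertible with inverse of norm $\sim 1/T$ once $T<T_0$, the gap $r_2$ absorbing the derivative loss in $\mathcal{R}$ via Cauchy estimates. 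This local invertibility is exactly what fails for large $T$---as the examples preceding Section~\ref{sec:error} show---which is why the hypothesis $T<T_0$ is indispensable. Combining the inversion with the triangle inequality and $\mathcal{E}_{\mathrm{trunc}}\le Tc_1e^{-\gamma/T^{1/q}}$ yields $\norm{f_{net}(x)-f_h^K(x)}\le c_1e^{-\gamma/T^{1/q}}+C\mathcal{L}$, after which the second inequality follows as explained above.
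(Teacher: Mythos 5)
Your proposal is correct, and its architecture coincides with the paper's: the same triangle-inequality reduction (so that only $\norm{\Xi_{T,f_{net}}-\Xi_{T,f_h^K}}_{\mathcal{B}(x,r_1)}$, with $\Xi_{T,g}=\text{ODESolve}(\cdot,g,T)$, needs to be inverted), the same Gevrey-type bounds on the $f_k$ obtained by pushing the IMDE recursion through Cauchy estimates on shrinking balls, with optimal truncation giving the sub-exponential term (this is exactly Lemma \ref{lem:difference}), the same derivation of the second inequality from the first together with $\norm{f_h^K-f}=O(h^p)$, and the same absorption mechanism for the inversion: a finite, geometrically contracting iteration over a scale of analytic balls that suppresses the a priori $O(m)$ size of $e=f_{net}-f_h^K$ down to an exponentially small leftover (Lemma \ref{lem:error}). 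The one genuine difference is how the one-step inversion inequality is manufactured. The paper (Lemma \ref{lem:rk}, third item of condition (\ref{con:intbound})) exploits analyticity of $h\mapsto\Phi_{h,\hat g}(y)-\Phi_{h,g}(y)$: consistency identifies the first Taylor coefficient as $\hat g(y)-g(y)$, and Cauchy estimates \emph{in the step-size variable} control all higher coefficients, so spatial derivatives of $e$ never appear; compositions are then dispatched by noting that compositions of Runge--Kutta methods are again Runge--Kutta methods, letting the solver be treated as a single RK step of size $T$. You instead telescope over the $S$ substeps and linearize in space, so your remainder $\mathcal{R}$ carries $e'$ and must be tamed by spatial Cauchy estimates at every stage of the Neumann iteration; this works, and is marginally more general (no appeal to closure of RK methods under composition), at the cost of heavier bookkeeping. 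Two cautions. First, your ``clean'' inversion $\norm{e(x)}\le\frac{C}{T}\norm{\Xi_{T,f_{net}}-\Xi_{T,f_h^K}}_{\mathcal{B}(x,r_1)}$ is not literally attainable: the scale of balls is exhausted after $\sim r_1/(CT)$ steps, so a leftover of size $\sim m e^{-c/T}$ always survives, and it must be (as your final bound implicitly does) folded into $c_1e^{-\gamma/T^{1/q}}$ rather than discarded; the paper keeps this term explicit in Lemma \ref{lem:error}. Second, the bound $\norm{f_h^K-f}\le c'h^p$ uniformly in $K$ does not follow from Theorem \ref{the:modiode} alone (which only gives the structure of the leading term); it requires summing your Gevrey bounds over $p\le k\le K$, exactly as in the second inequality of Lemma \ref{lem:difference}.
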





Here, the generalization requirement, i.e., using $\mathcal{L}$ as error bound, is in some sense necessary. Otherwise, if the ODE solver is one composition of implicit Euler method, then, there is no information of $f_{net}$ at $x$. The disadvantage is the generalization assumption on complex ball, we conjecture that there is no essential difference between complex and real space. In addition, the analyticity requirement indicates boundness of derivatives of $f$ due to Cauchy's estimate \cite{scheidemann2005introduction}, more precisely,
\begin{equation*}
\norm{f^{(k)}}_{\mathcal{B}(x,r_1)} \leq k!m r_2^{-k}, \quad k\geq 0,
\end{equation*}
which checks off the high-frequency solution and indicates that our results only hold for low-frequency discovery. In classical regression problems, training FNN first captures low-frequency components of the target function and then approximates the high-frequency \cite{luo2019theory,xu2019training}. We conjecture that the implicit regularization is also applied to ODE-net and thus the analyticity assumption of $f_{net}$ holds without any explicit regularization. Numerical results in Section \ref{sec:Numerical results} will validate both facts.

The requirement of Runge-Kutta method is not necessary. For $g$, $\hat{g}$ satisfying $\norm{g}_{\mathcal{B}(\mathcal{K},r)}\leq m$, $\norm{\hat{g}}_{\mathcal{B}(\mathcal{K},r)} \leq m$, we assume the numerical integrator $\Phi_{h}(y)$ satisfies
\begin{equation}\label{con:intbound}
\begin{aligned}
&1. \text{\ Analytic for}\ |h|\leq h_0 = b_{1} r/m\ \text{and}\ y \in \mathcal{K},\\
&2. \norm{\Phi_{h,\hat{g}}-\Phi_{h,g}}_{\mathcal{K}}\leq b_{2}h\norm{\hat{g}-g}_{\mathcal{B}(\mathcal{K},r)}, \ \text{for}\ |h|\leq h_0,\\
&3. \norm{\hat{g}-g}_{\mathcal{K}} \leq \frac{1}{|h|}\norm{\Phi_{h,\hat{g}}(y)-\Phi_{h,g}(y)}_{\mathcal{K}} \\
&\quad + \frac{b_2|h|}{h_1-|h|} \norm{\hat{g}-g}_{\mathcal{B}(\mathcal{K},b_3h_1 m)}, \ \text{for}\ |h|< h_1 \leq h_0.
\end{aligned}
\end{equation}
Here, $b_1,b_2,b_3$ depend only on the method. Regarding an ODE solver as a one-step integrator, once it satisfies condition (\ref{con:intbound}), then Theorem \ref{thm:error} holds.

\section{Numerical results}\label{sec:Numerical results}

In this section, we provide numerical evidences consistent with the theoretical findings. The exact solutions are computed by very high order numerical integrators on very fine mesh. The order of error $E$ with respect to discrete step $h$ is calculated by $\log_2(\frac{E(2h)}{E(h)})$. Several methods have been proposed for training ODE-nets, such as the adjoint method \cite{chen2018neural,li2017maximum} and the auto-differentiation technique \cite{baydin2017automatic}. Since the latter is more stable \cite{gholami2019anode}, we use the straightforward auto-differentiation to optimize MSE (mean squared error) loss without any explicit regularization. To circumvent learning loss, we train the neural network sufficiently and test the results near the dataset \cite{jin2020quantifying,kawaguchi2017generalization}.

We consider two datasets (\romannumeral1) flow data corresponding to  discovery on trajectory and (\romannumeral2) random data on domain corresponding to discovery on domain to verify our statements, respectively. In particular, we will investigate the results obtained by same HNN model on different data domain. \\
\textbf{Flow data}. The training dataset consists of $I+1$ data points
on a single trajectory starting from $x_0$ with shared data step $T$, i.e., $x_0,x_1,\cdots, x_I$ where $x_i = \phi_{iT}(x_0)$. These data points are grouped in pairs before being used in the neural network, and denoted as $\mathcal{T} = \{(x_i, x_{i+1})\}_{i=0}^I$. For this type, we define the error between $g$ and $\hat{g}$ by
\begin{equation*}
E(g,\hat{g}) = \int_{t\in [0, IT]} \norm{g(x(t)) - \hat{g}(x(t))}_{\infty}dt,
\end{equation*}
where $x(t) = \phi_{t}(x_0)$ and compute this error on very fine mesh. \\
\textbf{Random data on domain}. The training dataset consists of grouped
pairs of points randomly sampled from the given domain $\mathcal{X}$ with shared data step $T$, i.e., $\mathcal{T} = \{(x_i, \phi_{T}(x_i)\}_{i=0}^I$ where $x_i \in \mathcal{X}$.  For this type, we define the error between $g$ and $\hat{g}$ by
\begin{equation*}
E(g,\hat{g}) = \int_{x \in \mathcal{X}} \norm{g(x) - \hat{g}(x)}_{\infty} dx,
\end{equation*}
and compute this error by Monte Carlo integration.

\subsection{Pendulum problem}
We consider the mathematical pendulum of the form
\begin{equation*}\label{eq:PD}
\begin{aligned}
&\frac{d}{dt}p=-\frac{g}{l}\sin q, \\
&\frac{d}{dt}q=p.
\end{aligned}
\end{equation*}
\subsubsection{IMDE for ODE-net}
To begin with, we check out the results for ODE-nets. Here, the chosen numerical methods are the first order Euler method
\begin{equation*}
\begin{aligned}
\Phi_h(y) = y+h f(y),
\end{aligned}
\end{equation*}
with the truncation of the IMDE of order 3 given as
\begin{equation*}
\begin{aligned}
f_{h}^3(y)=&f(y) + \frac{h}{2}f'f(y) + \frac{h^2}{6}f''(f,f)(y)+\frac{h^2}{6}f'f'f(y)\\
&+ \frac{h^3}{24}f'''(f,f,f)(y)+\frac{h^3}{8}f''(f'f,f)(y)\\
&+\frac{h^3}{24}f'f''(f,f)(y)+\frac{h^3}{24}f'f'f'f(y);
\end{aligned}
\end{equation*}
the first order implicit Euler method
\begin{equation*}
\begin{aligned}
\Phi_h(y) = y+h f(\Phi_h(y)),
\end{aligned}
\end{equation*}
with the truncation of the IMDE of order 3 given as
\begin{equation*}
\begin{aligned}
f_{h}^3(y)=&f(y) - \frac{h}{2}f'f(y) + \frac{h^2}{6}f''(f,f)(y)+\frac{h^2}{6}f'f'f(y)\\
&- \frac{h^3}{24}f'''(f,f,f)(y)- \frac{h^3}{8}f''(f'f,f)(y)\\
&- \frac{h^3}{24}f'f''(f,f)(y)- \frac{h^3}{24}f'f'f'f(y);
\end{aligned}
\end{equation*}
and the second order explicit midpoint rule
\begin{equation*}
\begin{aligned}
\Phi_h(y) = y+h f(y+\frac{h}{2}f(y)),
\end{aligned}
\end{equation*}
with the truncation of the IMDE of order 3 given as
\begin{equation*}
\begin{aligned}
f_{h}^3(y)=&f(y) + \frac{h^2}{6}f'f'f(y)+\frac{h^2}{24}f''(f,f)(y)\\
&- \frac{h^3}{16} f'f''(f,f)(y)-\frac{h^3}{8}f'f'f'f(y).
\end{aligned}
\end{equation*}
\begin{figure}[htbp]
    \centering
    \includegraphics[width=0.48\textwidth]{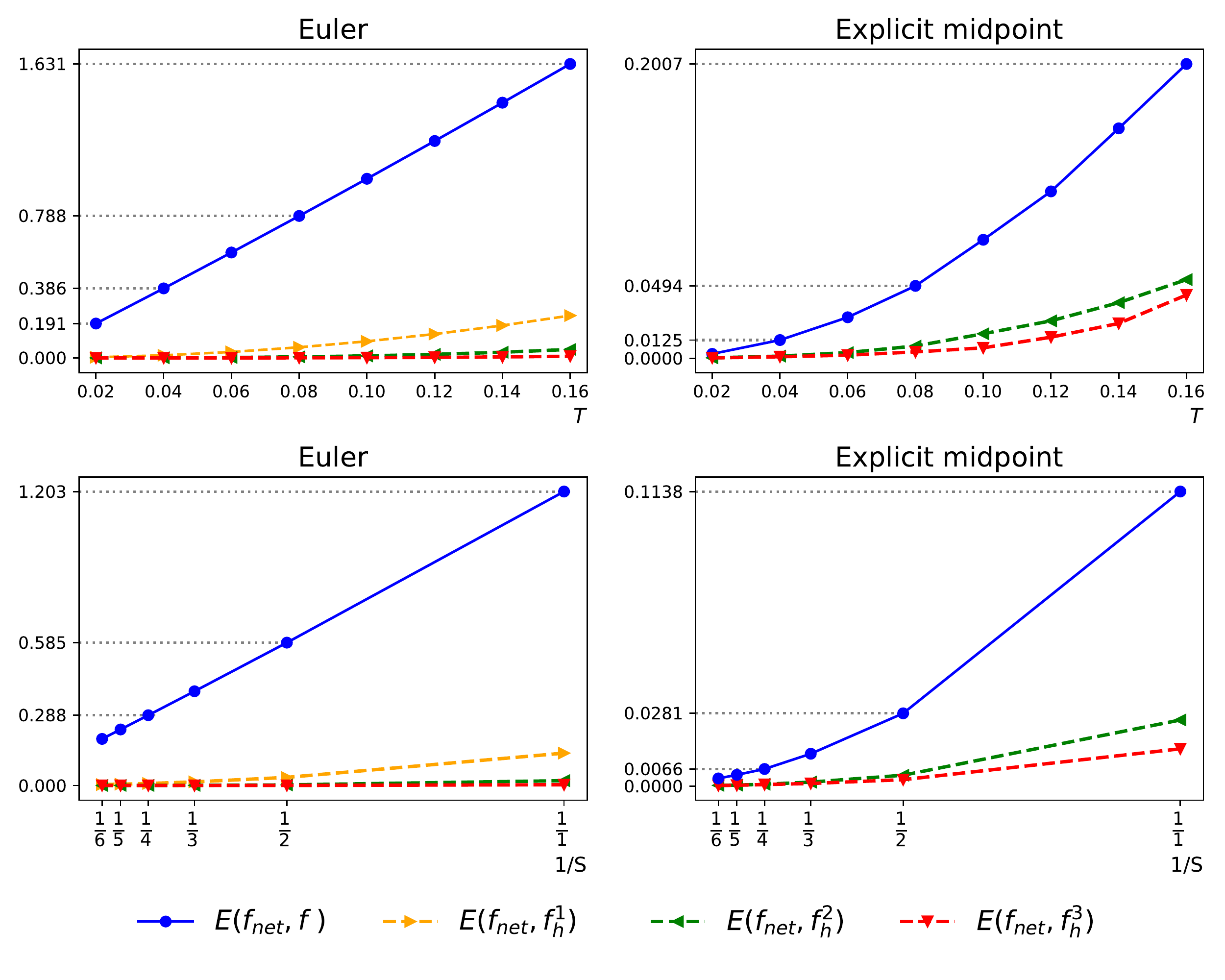}
    \caption{\textbf{Error versus $h$ for pendulum problem using flow data.} (\textbf{Top row}) Composition number $S$ is fixed to 1 thus $h=T$. (\textbf{Bottom row}) Data step $T$ is fixed to $0.12$ thus $h=0.12/S$.}
    \label{fig:PD_error}
\end{figure}

We set $m=l=1,g=10$. Neural networks employed are all two hidden layer and 128 neurons. The activation function is chosen to be tanh. We use Adam optimization \cite{kingma2014adam} where the learning rate is set to decay exponentially with linearly decreasing powers from $10^{-2}$ to $10^{-5}$. Results are collected after $3 \times 10^{5}$ parameter updates in ODE-net framework for Euler and explicit midpoint methods and in LMNet framework for implicit Euler.

We first sample flow data on a single trajectory from $t=0$ to $t=4$ with data step $T$ and initial condition $y_0=(0,1)$. After training, we record the error in Fig. \ref{fig:PD_error} top for different data step $T$ (with one composition thus $h=T$) and bottom for different composition number $S$ (with $T=0.12$ thus $h=0.12/S$). The error between $f$ and trained $f_{net}$ with respect to $h$ increase linearly for Euler while superlinearly for explicit midpoint, more precisely, the convergence order is 1.03 for Euler method while 2.02 for explicit midpoint rule.

\begin{figure}[htbp]
    \centering
    \includegraphics[width=0.48\textwidth]{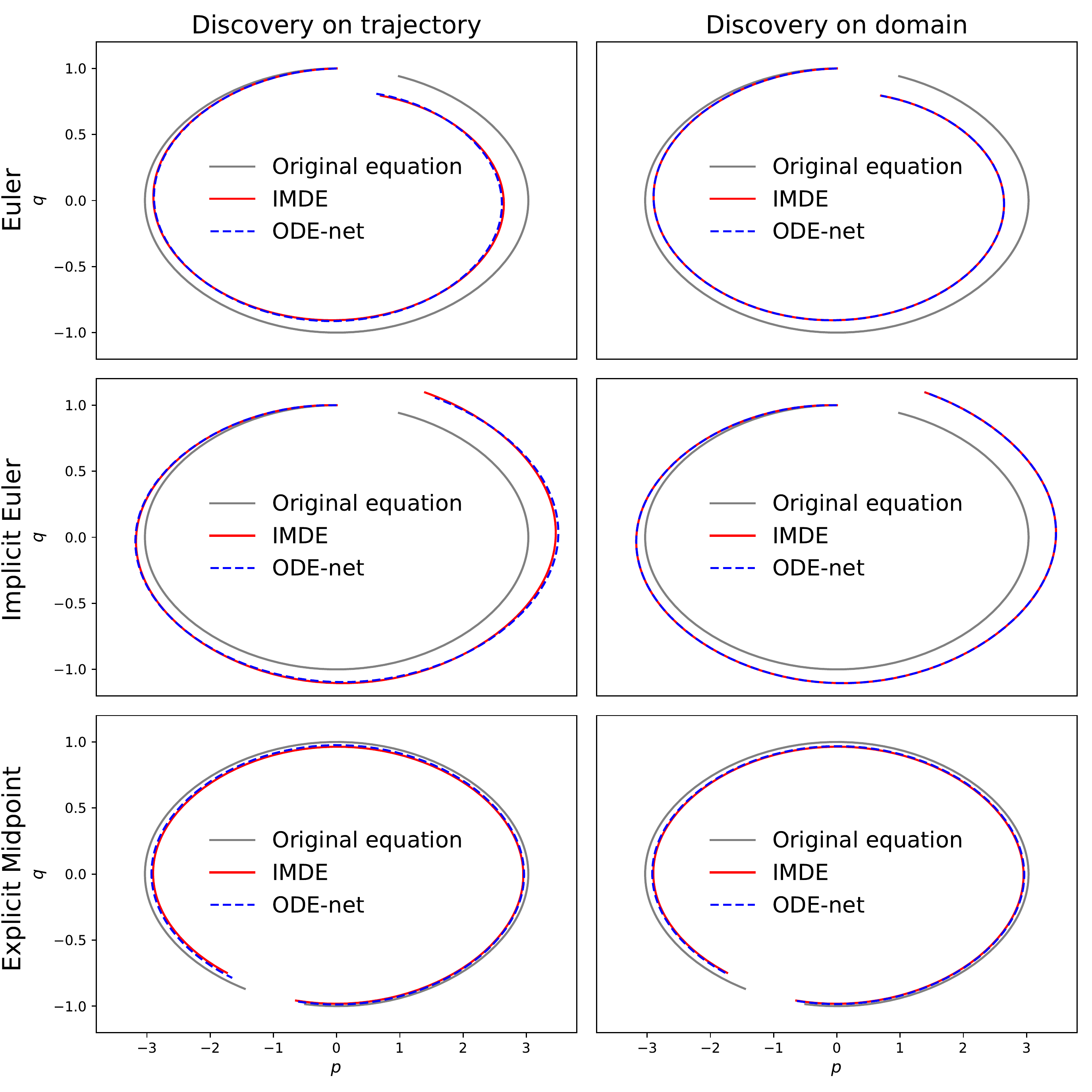}
    \caption{\textbf{Pendulum problem.} Phase portraits starting at $(0,1)$ from $t=0$ to $t=2$ for Euler and Implicit Euler while from $t=1$ to $t=3$ for explicit midpoint. Composition number is fixed to 1 and datastep is $0.02$ for Euler and implicit Euler while $0.12$ for explicit midpoint rule.}
    \label{fig:PD_flow}
\end{figure}

Meanwhile, in Fig. \ref{fig:PD_error}, the error markedly decreases with the increasing of the truncation order. We also depict the orbits starting at $(0,1)$ on the left column of Fig. \ref{fig:PD_flow}, where the learned dynamical systems capture the evolution of the corresponding IMDE. The right column of Fig. \ref{fig:PD_flow} show the performance when the data is randomly sampled from space $[-3.8, 3.8]\times[-1.2, 1.2]$. Here, the learned dynamical systems approximate the corresponding IMDE more accurately since sufficient data leads to better generalization. These results indicate that training ODE-net returns approximations of the IMDE, which is consistent with the theoretical findings.

\subsubsection{IMDE for HNN}
\begin{figure*}[htbp]
    \centering
    \includegraphics[width=0.96\textwidth]{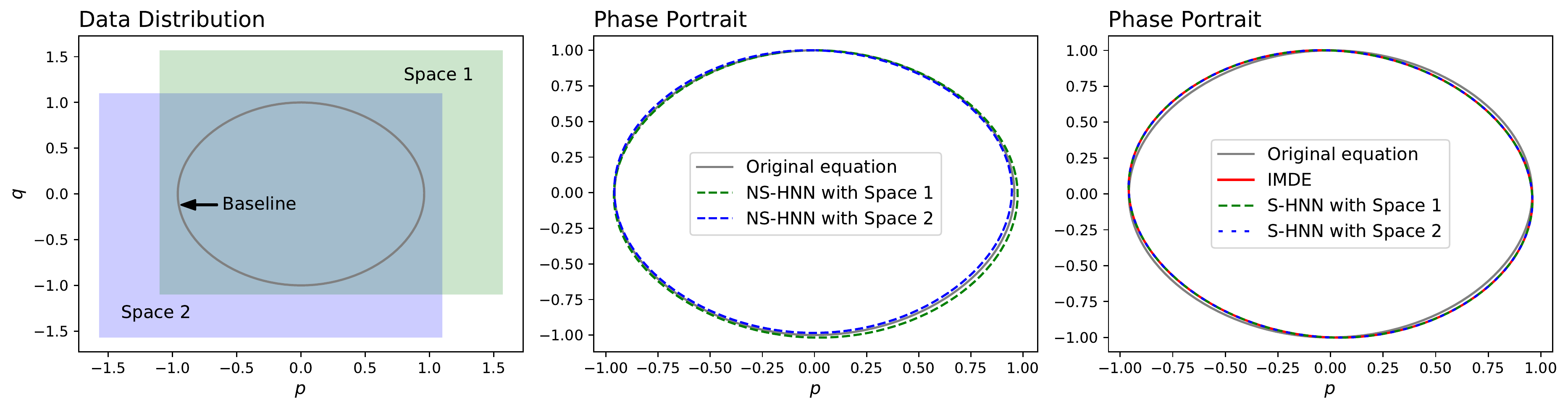}
    \caption{\textbf{Pendulum system.} (\textbf{Left}) Data distribution. (\textbf{Middle}) phase portrait for HNN with non-symplectic integrators. (\textbf{Right}) phase portrait for HNN with symplectic integrators.}
    \label{fig:pendulum}
\end{figure*}
The pendulum is a Hamiltonian system having the Hamiltonian
\begin{equation*}
H(p,q)=\frac{1}{2}p^2 - \frac{g}{l}\cos q,
\end{equation*}
and we also verify the assertion for HNN using this model.  Below we call the HNN with symplectic (non-symplectic) integrator as S-HNN (NS-HNN). Here, we set $l=m=g=1$, $T = 0.1$ and randomly sample training data with number 6000 from Space 1, $[-1.1,\frac{\pi}{2}]\times [-1.1,\frac{\pi}{2}]$, or Space 2, $[-\frac{\pi}{2}, 1.1]\times [-\frac{\pi}{2}, 1.1]$. This data distribution is plotted on the left of Fig. \ref{fig:pendulum}. Test data is generated in the same way with number 100. Neural network architecture employed in HNN is the same as above. Results are collected after $5 \times 10^5$ parameter updates by using Adam optimization with learning rate $1\times 10^{-3}$.  The chosen integrator is the explicit Euler method for NS-HNN and the symplectic Euler method for S-HNN. The symplectic Euler method is given by
\begin{equation*}
\begin{aligned}
\bar{p}=& p - h \frac{\partial H(\bar{p},q)}{\partial q},\\
\bar{q}=&q + h \frac{\partial H(\bar{p},q)}{\partial p},
\end{aligned}
\end{equation*}
which is symplectic and of order 1, $\Phi_h(p,q)=(\bar{p},\bar{q})$. The truncation of the IMDE of order 2 is a Hamiltonian system, and the Hamiltonian is
\begin{footnotesize}
\begin{equation*}
\begin{aligned}
H_{h}^2(p,q)=&H(p,q) + \frac{h}{2}\frac{\partial H}{\partial p} \frac{\partial H}{\partial q}(p,q)+ \frac{h^2}{6}\frac{\partial^2 H}{\partial p^2} (\frac{\partial H}{\partial q},\frac{\partial H}{\partial q})(p,q)  \\
&+ \frac{h^2}{6}\frac{\partial^2 H}{\partial p \partial q} (\frac{\partial H}{\partial p},\frac{\partial H}{\partial q})(p,q) +\frac{h^2}{6} \frac{\partial^2 H}{\partial q^2} (\frac{\partial H}{\partial p},\frac{\partial H}{\partial p})(p,q).
\end{aligned}
\end{equation*}
\end{footnotesize}
Since symplectic integrator is implicit in general, we train it like LMNet, i.e., optimizing
\begin{equation*}
\frac{1}{I}\sum_{i=1}^I \norm{p_i - h \frac{\partial u(\bar{p}_i,q_i)}{\partial q} -\bar{p}_i}_2^2+\norm{
q_i + h \frac{\partial u(\bar{p}_i,q_i)}{\partial p}-\bar{q}_i}_2^2,
\end{equation*}
where $(\bar{p}_i, \bar{q}_i) = \phi_h(p_i,q_i)$ and $u$ is neural network.

\begin{table}[htbp]
    \centering
    \begin{tabular}{lccc}
    \hline
         Integrator     & Space & Training loss & Test loss  \\
     \hline
        Explicit Euler  & 1     &$8.19\times10^{-6 }$  & $8.09\times10^{-6 } $   \\

        Explicit Euler  & 2     &$8.18\times10^{-6 }$  & $7.98\times10^{-6 } $ \\

        Symplectic Euler& 1     &$1.39\times10^{-10}$ & $1.68\times10^{-10}$ \\

        Symplectic Euler& 2     &$1.38\times10^{-10}$ & $1.25\times10^{-10}$ \\
     \hline
    \end{tabular}
    \caption{Training loss and test loss of HNN.}
    \label{tab:pendulumloss}
\end{table}

After training, we solve the exact solutions using initial condition $y_0=(0,1)$ in one period. Fig. \ref{fig:pendulum} shows the exact dynamics of original equation, IMDE and the equations learned by HNN. S-HNN with space 1 and 2 reproduce the phase flow of the same IMDE despite different spaces, while NS-HNN with different data yield discrepant results. Table \ref{tab:pendulumloss} shows the training loss and test loss of HNN. S-HNN achieves lower loss. Clearly, the numerical results support the assertion.

\subsection{Damped oscillator problem}\label{sec:do}
In addition, we consider the two-dimensional damped harmonic oscillator with cubic dynamics, which is also investigated in \cite{keller2021discovery, raissi2018multistep}. The equation is of the form
\begin{equation*}\label{eq:dho}
\begin{aligned}
&\frac{d}{dt}p = -0.1p^3+2.0q^3,\\
&\frac{d}{dt}q = -2.0p^3-0.1q^3.
\end{aligned}
\end{equation*}

Training data is $\mathcal{T}=\{(y_{i},\phi_T(y_{i}))\}_{i=1}^{10000}$, where $y_i=(p_i,q_i)$ are randomly collected from compact set $[-2.2,2.2]\times[-2.2,2]$, $\phi_T(y)$ is the exact solution and $T$ is the data step. Meanwhile, test data is generated in the same way with number of 100. Neural network employed is of one hidden layer and 128 neurons with sigmoid activation. We use batch size of 2000 data points and Adam optimization with learning rate = $1 \times 10^{-4}$. Results are collected after $5 \times 10^{5}$ parameter updates.

\begin{figure}[htbp]
    \centering
    \includegraphics[width=0.48\textwidth]{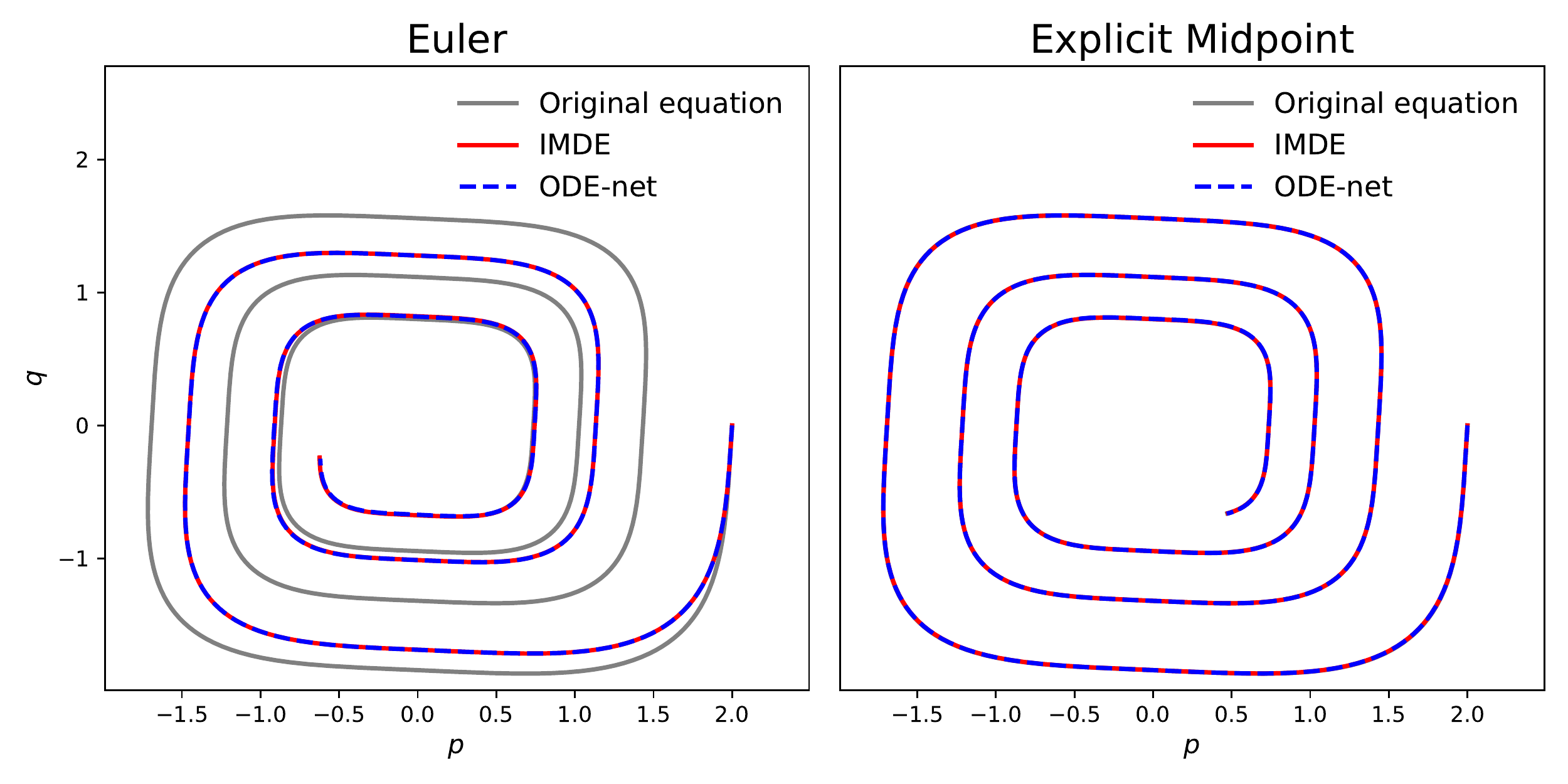}
    \caption{\textbf{Damped harmonic oscillator.} (\textbf{Left}) The ODE solver is two compositions of Euler method. (\textbf{Right}) The ODE solver is two compositions of explicit midpoint rule.}
    \label{fig:DO}
\end{figure}

\begin{table*}[htbp]
    \centering
    \setlength{\tabcolsep}{7pt}{
    \begin{tabular}{ccccccccccc}
    \toprule
    \multirow{2}{*}{DS}&\multirow{2}{*}{CN}&\multicolumn{5}{c}{Damped oscillator with Euler method}&\multicolumn{4}{c}{Lorenz system with explicit midpoint rule}\cr
    \cmidrule(lr){3-7}\cmidrule(lr){8-11}
      &&Training loss& Test loss & E($f_{net}$,$f_{h}^3$) & E($f_{net}$,$f$) & Order & Training loss & E($f_{net}$,$f_{h}^3$) & E($f_{net}$,$f$) & Order\\
     \cmidrule(lr){1-2} \cmidrule(lr){3-7}\cmidrule(lr){8-11}

       0.01  &2 &$1.28\times10^{-9}$ & $1.26\times10^{-9}$   &$3.69\times10^{-3}$& $0.139$& ---&$1.47\times10^{-9}$ & $4.79\times10^{-3}$& $7.87\times10^{-3}$& ---\\

        0.02 &2 &$5.22\times10^{-9}$ & $3.41\times10^{-9}$  & $3.78\times10^{-3}$& $0.277$& 0.992&$4.78\times10^{-9}$ & $4.62\times10^{-3}$& $2.30\times10^{-2}$& 1.55\\

        0.04 &2 &$2.31\times10^{-8}$ & $1.74\times10^{-8}$  & $5.48\times10^{-3}$& $0.547$& 0.982  &$1.35\times10^{-8}$ & $1.16\times10^{-2}$& $8.53\times10^{-2}$& 1.89\\

        0.08 &2 &$1.38\times10^{-7}$ & $1.43\times10^{-7}$  & $3.70\times10^{-2}$& $1.054 $& 0.947 &$4.60\times10^{-7}$ & $1.18\times10^{-1}$& $3.08\times10^{-1}$& 1.85\\
     \cmidrule(lr){1-2} \cmidrule(lr){3-7}\cmidrule(lr){8-11}
        0.04 &8 &$7.60\times10^{-9}$ & $6.55\times10^{-9}$  & $2.95\times10^{-3}$& $0.139$&--- &$1.31\times10^{-8}$ & $4.71\times10^{-3}$& $7.81\times10^{-3}$& ---\\

        0.04 &4 &$9.60\times10^{-9}$ & $1.22\times10^{-8}$  & $3.28\times10^{-3}$& $0.276$& 0.993  &$1.14\times10^{-8}$ & $4.60\times10^{-3}$& $2.29\times10^{-2}$& 1.55\\

        0.04 &2 &$2.31\times10^{-8}$ & $1.74\times10^{-8}$  & $5.48\times10^{-3}$& $0.547$& 0.987 &$1.35\times10^{-8}$ & $1.16\times10^{-2}$& $8.53\times10^{-2}$& 1.89\\

        0.04 &1 &$5.39\times10^{-8}$ & $1.37\times10^{-7}$   & $3.71\times10^{-2}$& $1.056$& 0.949 &$1.61\times10^{-8}$ & $1.16\times10^{-1}$& $3.06\times10^{-1}$& 1.84\\
     \toprule
    \end{tabular}}
    \caption{\textbf{Quantitative results.} DS and IN stand for data step $T$ and composition number $S$, respectively.
    }
    \label{tab:quantitative}
\end{table*}

After training, we solve the exact solutions from $t=0$ to $t=10$ using initial condition $y_0=(2,0)$. Fig. \ref{fig:DO} shows the exact dynamics of original equation, IMDE and the equations learned by ODE-nets. Here, the data step is 0.04. The ODE-net accurately capture the evolution of corresponding IMDE. Note that the original equation and the IMDE on the second row coincide due to the high order integrator.

The quantitative results for Euler method are recorded in Table \ref{tab:quantitative} left side. Here, E($\cdot$,$\cdot$) is calculated by sampling $1\times10^{6}$ points from  $[-2.2,2.2]\times[-2.2,2]$. E($f_{net}$,$f_{h}^3$) is much less than E($f_{net}$,$f$), which again indicates the approximation target is the IMDE. In addition, the order of E($f_{net}$,$f$) with respect to discrete step is approximately 1,
coinciding with Theorem \ref{thm:error}.

\subsection{Lorenz system}\label{sec:ls}
Subsequently, consider the nonlinear Lorenz system
\begin{equation*}
\begin{aligned}
\frac{d}{dt}p =&10(q-p),\\
\frac{d}{dt}q =&p(28-10r)-q,\\
\frac{d}{dt}r =&10pq-\frac{8}{3}r,
\end{aligned}
\end{equation*}
where $y=(p,q,r)$. The training data consists of data points on a single trajectory from $t=0$ to $t=10$ with data step $T$ and initial condition $y_0=(-0.8,0.7,2.6)$. The chosen model architecture and hyper-parameters are the same as in subsection \ref{sec:do} except batch size is 500.
\begin{figure}[htbp]
    \centering
    \includegraphics[width=0.48\textwidth]{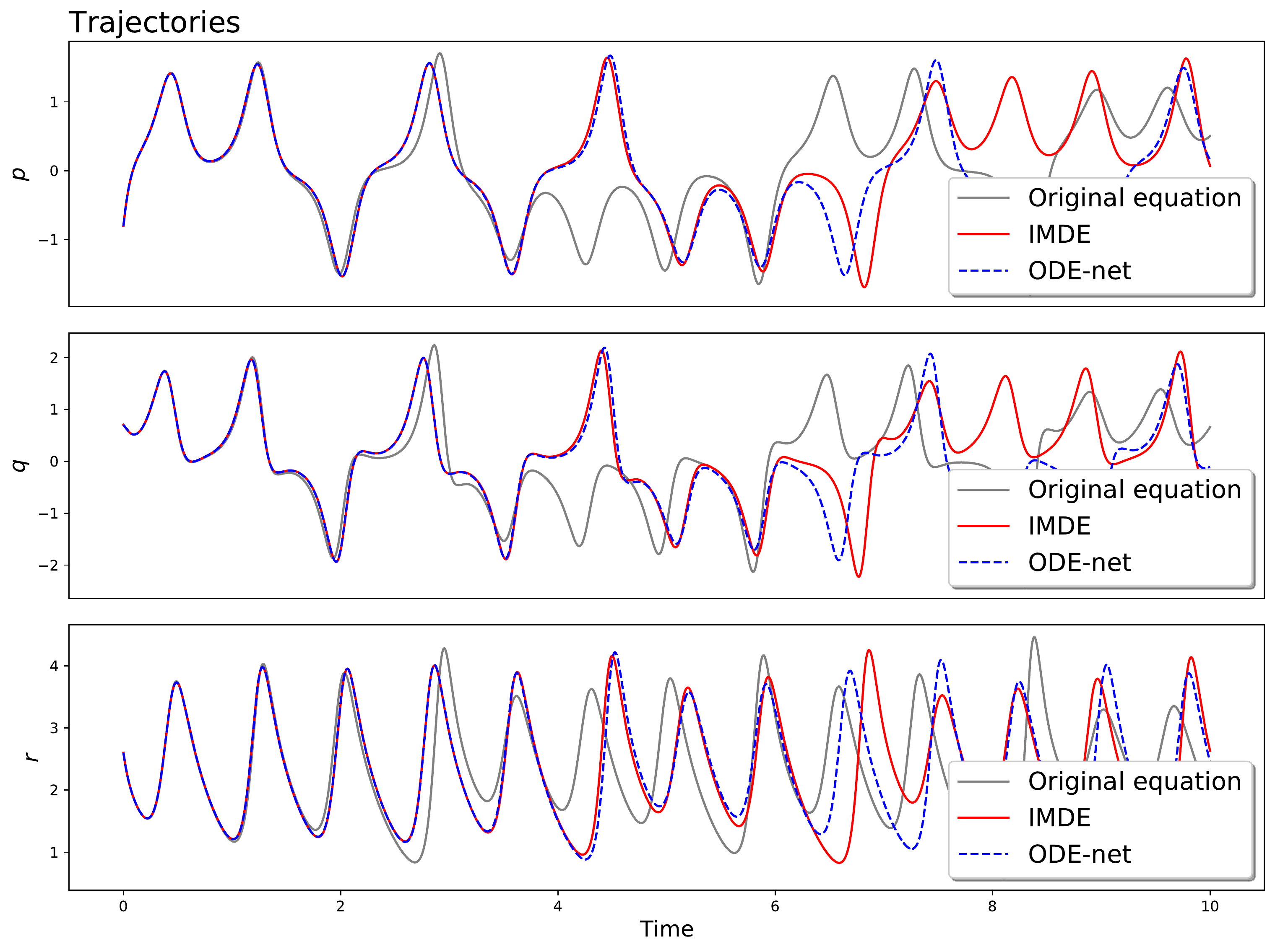}
    \caption{\textbf{Lorenz system.} The trajectories of original equation also represent the training data.}
    \label{fig:LS}
\end{figure}

Upon training, we solve the exact solution from $t=0$ to $t=10$ using initial condition $y_0=(-0.8,0.7,2.6)$. Fig. \ref{fig:LS} depicts the exact trajectories of original equation, IMDE and the equation learned by ODE-net. Here, the data step is 0.04 and the ODE solver is two compositions of explicit midpoint rule. These results could be illuminated by the theoretical findings of this paper. To begin with, the identified system accurately reproduces the trajectories of the IMDE from $t=0$ to $t=4$ due to the generalization ability of neural networks. Then, the ODE-net tries to capture the dynamics of the IMDE, however, there are no sufficient information to tell how $\phi_T$ acts later. Thus the discrepancies explode over time. As demonstrated in Fig. \ref{fig:LS}, the trajectories of the IMDE significantly deviate from the original equation at around $t=4$. Consequently, the identified system drifts away after $t=4$ because of the accumulated errors.

The quantitative results are recorded in Table \ref{tab:quantitative} right side. Here, E($f_{net}$,$f_{h}^3$) is less than E($f_{net}$,$f$) and the order of E($f_{net}$,$f$) with respect to discrete step is approximately 2, which are consistent with the theoretical findings.

\section{Summary}\label{sec:summary}

In this paper, we perform the numerical analysis of discovery of dynamics using ODE based models. The main result is that training an ODE-net returns an approximation of the inverse modified differential equation (IMDE). In addition, the convergence analysis of data-driven discovery using ODE-net is presented, which indicates that the error between trained network $f_{net}$ and the unknown vector field $f$ is bounded by the sum of discrete error $Ch^p$ and learning loss, where $h$ is the discrete step and $p$ is the order of integrator. We also discuss learning Hamiltonian system, IMDE reveals the potential problems and makes clear the behavior of HNN theoretically. Finally, numerical results support the theoretical analysis.

One limitation of our work is the generalization and analyticity  requirements on complex space. Quantifying the generalization
error and implicit regularization for supervised learning are still open research problems. We would like to further investigate such problems for ODE-net in the future.

Low frequency and fine step are essential for both theory and practice. For discovery of high frequencies dynamics, we are inevitably faced with choosing specific ODE solver employed in ODE-net. One possible approach is filtered integrator or  the Modulated Fourier Expansion \cite{hairer2001long}.

Approximation targets depend on the ODE solver. As HNN needs symplectic integrator, further numerical analysis is needed. It is another interesting problem.


\appendices
\section{Proof of Theorem \ref{the:inmde}}\label{sec:inmde}
\begin{proof}
The computation procedure of $f_h$ uniquely defines the functions $f_k$ and can be rewritten as the following recursion:
\begin{equation}\label{recursion}
f_k = \lim_{h\rightarrow 0} \frac{\phi_{h,f} - \Phi_{h,f_h^{k-1}}}{h^{k+1}}.
\end{equation}
We first prove
\begin{equation}\label{Srecursion}
f_k = \lim_{h\rightarrow 0} \frac{\phi_{Sh,f} - \brac{\Phi_{h,f_{h}^{k-1}}}^S}{Sh^{k+1}},
\end{equation}
by induction on $S \geq 1$. First, the case when $S=1$ is obvious. Suppose now that the statement holds for $S-1$. Then, by this inductive hypothesis, we obtain
\begin{equation*}
\begin{aligned}
&\phi_{Sh,f} - \brac{\Phi_{h,f_{h}^{k-1}}}^S \\
=& \Phi_{h,f_{h}^{k-1}} \circ\brac{\phi_{(S-1)h,f} - \brac{\Phi_{h,f_{h}^{k-1}}}^{S-1}}\\
&+ \brac{\phi_{h,f} - \Phi_{h,f_{h}^{k-1}}}\circ \phi_{(S-1)h,f} \\
=& (S-1)h^{k+1} \Phi_{h,f_{h}^{k-1}} \circ f_k + h^{k+1}f_k \circ \phi_{(S-1)h,f} + O(h^{k+2})\\
=&Sh^{k+1}f_k + O(h^{k+2}),
\end{aligned}
\end{equation*}
where we have used the fact that \begin{equation*}
\phi_{(S-1)h,f} = I_N + O\left((S-1)h\right),\ \Phi_{h,f_{h}^{k-1}} = I_N + O(h).
\end{equation*}
Hence the induction is completed.

Suppose that the vector field of IMDE for $\brac{\Phi_{h}}^S$ is of the form $F_{Sh}(y) = \sum_{k=0}^{\infty} (Sh)^k F_k(y)$.
We next prove that $S^kF_k = f_k$ by induction on $k$. First the case when $k=0$ is obvious since $F_0 = f_0=f$. Suppose now $S^kF_k = f_k$ holds for $k\leq K-1$. This inductive hypothesis implies that $F_{Sh}^{K-1}=f_h^{K-1}$. Using (\ref{recursion}) for $F_K$ we obtain
\begin{equation*}
F_K = \lim_{h\rightarrow 0} \frac{\phi_{Sh,f} - \brac{\Phi_{h,F_{Sh}^{K-1}}}^S}{(Sh)^{K+1}}.
\end{equation*}
This together with (\ref{Srecursion}) concludes the induction and thus completes the proof.
\end{proof}

\section{Proof of Theorem \ref{the:inmde_lmnet}}\label{sec:inmde_lmnet}
\begin{proof}
The approach for computation of $f_h$ is presented in two steps. To begin with, by setting $t=mh$ and $F(y)=I_N(y)$ in the formula (\ref{eq:Ld}), the left of (\ref{eq:multistep}) can be expanded as

\begin{align}
\sum_{m=0}^M \alpha_m\phi_{mh,f}(x) =& \sum_{m=0}^M \alpha_m \sum_{k=0}^{\infty}\frac{(mh)^k}{k!}(D^kI_N)(x) \notag \\
=& \sum_{k=0}^{\infty}h^k[\sum_{m=0}^M \alpha_m \frac{m^k}{k!}(D^kI_N)(x)].\label{eq:msleft}
\end{align}
In addition, using (\ref{eq:Ld}) with setting $t=mh$ and $F(y)=f_h(y)$ implies
\begin{small}
\begin{equation*}
\begin{aligned}
h\sum_{m=0}^M\beta_mf_h(\phi_{mh,f}(x))
=&h\sum_{m=0}^M\beta_m \sum_{j=0}^{\infty}\frac{(mh)^j}{j!} \sum_{i=0}^{\infty}h^i(D^jf_i)(x).
\end{aligned}
\end{equation*}
\end{small}
By interchanging the summation order, we obtain
\begin{align}
&h\sum_{m=0}^M\beta_mf_h(\phi_{mh,f}(x))\notag\\
=&h\sum_{m=0}^M\beta_m \sum_{k=0}^{\infty}h^k \sum_{j=0}^k\frac{m^j}{j!}(D^jf_{k-j})(x) \notag\\
=&\sum_{k=0}^{\infty}h^{k+1} \sum_{m=0}^M\beta_m[f_k(x)+\sum_{j=1}^k\frac{m^j}{j!}(D^jf_{k-j})(x)].\label{eq:msright}
\end{align}
Comparing coefficients of $h^k$ in (\ref{eq:msleft}) and (\ref{eq:msright}) for $k=0,1,2,\cdots$ yields
\begin{equation*}
\sum_{m=0}^M\alpha_m=0,
\end{equation*}
the consistency condition, and
\begin{equation*}
\begin{aligned}
&\sum_{m=0}^M\beta_m[f_k(x)+\sum_{j=1}^k\frac{m^j}{j!}(D^jf_{k-j})(x)]\\
=&\sum_{m=0}^M \alpha_m \frac{m^{k+1}}{(k+1)!}(D^{k+1}I_N)(x).
\end{aligned}
\end{equation*}
By plugging $(D^{k+1}I_N)(x)= (D^{k}f)(x)$ and setting $y:=x$, unique $f_k$ are obtained recursively, i.e.,
\begin{equation*}
\begin{aligned}
f_k(y)=&\frac{1}{(\sum_{m=0}^M\beta_m)} \sum_{m=0}^M \alpha_m\frac{m^{k+1}}{(k+1)!}(D^{k}f)(y)\\
&-\frac{1}{(\sum_{m=0}^M\beta_m)}\sum_{m=0}^M\beta_m\sum_{j=1}^k\frac{m^j}{j!}(D^jf_{k-j})(y).
\end{aligned}
\end{equation*}
Here, the right expression only involves $f_j$ with $j< k$ and
\begin{equation*}
\sum_{m=0}^M\beta_m =\sum_{m=0}^M m \alpha_m  \neq 0
\end{equation*}
for weakly stable and consistent methods.
\end{proof}

\section{proof of Theorem \ref{the:modiHam}}\label{sec:modiHam}

\begin{proof}
For a Hamiltonian system (\ref{eq:Hami}), the target function $f$ obeys $f(y)=J^{-1}\nabla H(y)$, which yields $f_0 =J^{-1}\nabla H(y)$. Suppose $f_k(y)=J^{-1}\nabla H_k(y)$ for $k=1,2,\cdots,K$, we need to prove the existence of $H_{K+1}(y)$ satisfying
\begin{equation*}
f_{K+1}(y)=J^{-1}\nabla H_{K+1}(y).
\end{equation*}
By induction, the truncated IMDE
\begin{equation*}
\frac{d}{dt}\tilde{y}=f_{h}^K(\tilde{y})=f(\tilde{y})+hf_1(\tilde{y})+h^2f_2(\tilde{y})+ \cdots +h^{K}f_K(\tilde{y})
\end{equation*}
has the Hamiltonian $H(\tilde{y})+hH_1(\tilde{y})+\cdots+h^{K}H_K(\tilde{y})$. For arbitrary initial value $x$, the numerical solution $\Phi_{h,f_{h}^K}(x)$ satisfies
\begin{equation*}
\phi_{h,f}(x)= \Phi_{h,f_{h}^K}(x)+h^{K+2}f_{K+1}(x)+O(h^{K+3}).
\end{equation*}
And thus
\begin{equation*}
\phi_{h,f}'(x)= \Phi_{h,f_{h}^K}'(x)+h^{K+2}f_{K+1}'(x)+O(h^{K+3}),
\end{equation*}
where $\phi_{h,f}$ and $\Phi_{h,f_{h}^K}$ are symplectic maps, and $\Phi_{h,f_{h}^K}'(y)=I+O(h)$. Then, we have
\begin{equation*}
\begin{aligned}
J=&\phi_{h,f}'(x)^TJ\phi_{h,f}'(x)\\
=&J+h^{K+2}(f_{K+1}'(x)^TJ+Jf_{K+1}'(y))+O(h^{K+3}).
\end{aligned}
\end{equation*}
Consequently, $f_{K+1}'(x)^TJ+Jf_{K+1}'(x)=0$, i.e., $Jf_{K+1}'(x)$ is symmetric. According to the Integrability Lemma \cite[Lemma \uppercase\expandafter{\romannumeral6}.2.7]{hairer2006geometric}, for any $x$, there exists a neighbourhood and a smooth function $H_{K+1}$ obeying
\begin{equation*}
f_{K+1}(x)=J^{-1}\nabla H_{K+1}(x)
\end{equation*}
on this neighbourhood. Hence the induction holds and the proof is completed.
\end{proof}

\section{Proof of Theorem \ref{thm:error}}

\subsection{Properties of Runge-Kutta methods}
To prove Theorem \ref{thm:error}, we firstly prove that the condition (\ref{con:intbound}) is satisfied for Runge-Kutta methods (\ref{runge-kutta}).
\begin{lemma}\label{lem:rk}
For a consistent Runge-Kutta method (\ref{runge-kutta}) denoted as $\Phi_{h}$, let
\begin{equation*}
\mu = \sum_{i=1}^s|b_i|, \quad \kappa =\max_{1\leq i\leq s}\sum_{j=1}^s|a_{ij}|.
\end{equation*}
Consider $g,\hat{g}$ satisfying $\norm{g}_{\mathcal{B}(\mathcal{K},r)}\leq m, \norm{\hat{g}}_{\mathcal{B}(\mathcal{K},r)} \leq m$, if $\kappa \neq0$, then $\Phi_{h,g}$, $\Phi_{h,\hat{g}}$ are analytic for $|h|\leq h_0=r/4\kappa m$ and
\begin{equation*}
\norm{\Phi_{h,\hat{g}}-\Phi_{h,g}}_{\mathcal{K}}\leq 2\mu |h|\norm{\hat{g}-g}_{\mathcal{B}(\mathcal{K},|h|\kappa m)}.
\end{equation*}
Furthermore, for $|h|< h_1 \leq h_0$,
\begin{small}
\begin{equation*}
\norm{\hat{g}-g}_{\mathcal{K}} \leq \frac{\norm{\Phi_{h,\hat{g}}(y)-\Phi_{h,g}(y)}_{\mathcal{K}}}{|h|} +\frac{2\mu|h|\norm{\hat{g}-g}_{\mathcal{B}(\mathcal{K},h_1\kappa m)}}{h_1-|h|} .
\end{equation*}
\end{small}
\end{lemma}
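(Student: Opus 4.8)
The plan is to establish the three assertions in order, all built from the representation of one Runge--Kutta step through its internal stages, combined with analyticity of $g,\hat g$ and Cauchy's estimate. Throughout I work with the $l_\infty$-norm and record that, since $\norm{g}_{\mathcal{B}(\mathcal{K},r)}\le m$, Cauchy's estimate gives $\norm{g'}\le 2m/r$ on $\mathcal{B}(\mathcal{K},r/2)$.

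For analyticity I would write the stages as $Y_i = y_0 + h\sum_{j=1}^s a_{ij}g(Y_j)$ and view the stacked vector $\mathbf{Y}=(Y_1,\dots,Y_s)$ as a fixed point of $\Psi(\mathbf{Y})_i = y_0 + h\sum_j a_{ij}g(Y_j)$ on $(\C^N)^s$ with norm $\max_i\norm{Y_i}$. For $y_0\in\mathcal{K}$ every image obeys $\norm{\Psi(\mathbf{Y})_i-y_0}\le |h|\kappa m$, so $\Psi$ maps the closed ball of radius $r/4$ about $(y_0,\dots,y_0)$ into itself as soon as $|h|\le h_0=r/4\kappa m$, and on that ball the bound $\norm{g'}\le 2m/r$ makes $\Psi$ a contraction with factor $2|h|\kappa m/r\le 1/2$. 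The Banach fixed point theorem then produces a unique stage vector which, being the uniform limit of the Picard iterates (each analytic in $(h,y_0)$), is analytic; hence $\Phi_{h,g}(y_0)=y_0+h\sum_i b_i g(Y_i)$ is analytic. Note that the choice $h_0=r/4\kappa m$ is exactly what forces the contraction factor down to $1/2$.

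For the field-Lipschitz bound I would compare the stages $Y_i,\hat Y_i$ of $g,\hat g$ at the same $y_0$. Subtracting the stage equations and writing $\epsilon:=\norm{\hat g-g}_{\mathcal{B}(\mathcal{K},|h|\kappa m)}$, $\delta:=\max_i\norm{\hat Y_i-Y_i}$, the estimate $\norm{g'}\le 2m/r$ gives $\delta\le |h|\kappa(\epsilon+(2m/r)\delta)$; absorbing the $\tfrac12\delta$ term yields $\delta\le 2|h|\kappa\epsilon$ for $|h|\le h_0$. Each stage difference then satisfies $\norm{\hat g(\hat Y_i)-g(Y_i)}\le \epsilon+(2m/r)\delta\le 2\epsilon$, and summing against $\sum_i|b_i|=\mu$ in $\Phi_{h,\hat g}-\Phi_{h,g}=h\sum_i b_i(\hat g(\hat Y_i)-g(Y_i))$ delivers the claimed $2\mu|h|\epsilon$.

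The inverse estimate is the main obstacle, and I would handle it through analyticity in the complex step $h$. Fix $y\in\mathcal{K}$ and set $D(z)=\Phi_{z,\hat g}(y)-\Phi_{z,g}(y)$, analytic on $|z|\le h_0$ by the first assertion; consistency $\sum_i b_i=1$ gives $D(0)=0$ and $D'(0)=\hat g(y)-g(y)$. Thus $E(z):=D(z)/z$ extends analytically with $E(0)=\hat g(y)-g(y)$, and a Cauchy integral over $|z|=h_1$ bounds $\norm{E(h)-E(0)}\le \frac{|h|}{h_1-|h|}\max_{|z|=h_1}\norm{E(z)}$. Since $\norm{E(z)}=\norm{D(z)}/h_1$ and the second assertion gives $\norm{D(z)}\le 2\mu h_1\norm{\hat g-g}_{\mathcal{B}(\mathcal{K},h_1\kappa m)}$ on that circle, combining $\norm{E(0)}\le \norm{E(h)}+\norm{E(h)-E(0)}$ and taking the supremum over $y\in\mathcal{K}$ produces the stated inequality. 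The delicate points are the uniform complex analyticity of the step map on $\mathcal{K}$ (supplied by the first assertion) and selecting the contour radius $h_1$ so that the second assertion applies under the integral.
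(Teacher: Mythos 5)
Your proposal is correct and follows essentially the same route as the paper: stage-wise fixed-point/implicit-function analyticity, the same stage-difference estimate giving the forward bound $2\mu|h|\norm{\hat g-g}_{\mathcal{B}(\mathcal{K},|h|\kappa m)}$, and an inverse estimate extracted from complex analyticity in $h$ with consistency supplying the first-order term. Your part-3 packaging via the Cauchy integral for $E(z)=D(z)/z$ on the circle $|z|=h_1$ is just a condensed form of the paper's argument, which bounds all Taylor coefficients of $D$ at $h=0$ by Cauchy's estimate and sums the geometric series $\sum_{i\ge 2}(|h|/h_1)^{i-1}=|h|/(h_1-|h|)$.
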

\begin{proof}
For $y \in \mathcal{B}(\mathcal{K}, r/2)$ and $\norm{\Delta y} \leq 1$, the function $\alpha(z) = f(y+ z \Delta y)$ is analytic  for $|z| \leq r/2$ and bounded by $m$. By Cauchy's estimate, we obtain
\begin{equation*}
\norm{f'(y)\Delta y} = \norm{\alpha ' (0)} \leq 2m/r,
\end{equation*}
and $\norm{f'(y)}\leq 2m/r$ for $y \in \mathcal{B}(\mathcal{K}, r/2)$ in the operator norm.

For a Runge-Kutta method (\ref{runge-kutta}) with initial point $y_0\in \mathcal{K}$, the solution can be obtained by the nonlinear systems
\begin{equation*}
\begin{aligned}
&u_i =y_0+h\sum_{j=1}^sa_{ij}\hat{g}(u_j)\quad i=1,\cdots ,s,\\ &\Phi_{h,\hat{g}}(y_0) = y_0+h\sum_{i=1}^sb_i\hat{g}(u_i),\\
&v_i =y_0+h\sum_{j=1}^sa_{ij}g(v_j)\quad i=1,\cdots ,s,\\&\Phi_{h,g}(y_0) = y_0+h\sum_{i=1}^sb_ig(v_i).
\end{aligned}
\end{equation*}
Due to the Implicit Function Theorem \cite{scheidemann2005introduction}, $u_i,v_i$ possess unique solutions on the closed set $\mathcal{B}(\mathcal{K},|h|\kappa m)$ if $2|h|\kappa m/r \leq \gamma<1$ and the method is analytic for $|h|\leq \gamma r/ 2\kappa m$.

In addition,
\begin{equation*}
\begin{aligned}
&\max_{1\leq i\leq s}\norm{u_i-v_i}\\
\leq & |h|\sum_{j=1}^s|a_{ij}|(\norm{\hat{g}(u_j)-\hat{g}(v_j)}+ \norm{\hat{g}(v_j)-g(v_j)})\\
\leq & |h|\kappa \frac{2m}{r}\max_{1\leq j\leq s}\norm{u_j-v_j}+|h|\kappa\norm{\hat{g}-g}_{\mathcal{B}(\mathcal{K},|h|\kappa m)}.
\end{aligned}
\end{equation*}
Thus we obtain
\begin{equation*}
\max_{1\leq i\leq s}\norm{u_i-v_i} \leq \frac{\kappa}{1-|h|\kappa \frac{2m}{r}} |h|\norm{\hat{g}-g}_{\mathcal{B}(\mathcal{K},|h|\kappa m)}.
\end{equation*}
Next, we have
\begin{equation*}
\begin{aligned}
&\norm{\Phi_{h,\hat{g}}(y_0)-\Phi_{h,g}(y_0)}\\
\leq& |h|\sum_{i=1}^s|b_{i}|\norm{\hat{g}(u_i)-\hat{g}(v_i)}+ |h|\sum_{i=1}^s|b_{i}|\norm{\hat{g}(v_i)-g(v_i)}\\
\leq& |h|\mu \frac{2m}{r}\max_{1\leq j\leq s}\norm{u_j-v_j}+|h|\mu \norm{\hat{g}-g}_{\mathcal{B}(\mathcal{K},|h|\kappa m)}\\
\leq& \left(|h|\mu \frac{2m}{r}\frac{\kappa}{1-|h|\kappa \frac{2m}{r}} +\mu \right) |h|\norm{\hat{g}-g}_{\mathcal{B}(\mathcal{K},|h|\kappa m)}.
\end{aligned}
\end{equation*}
Taking $\gamma = 1/2$, together with the arbitrariness of $y_0$, yields
\begin{equation*}
\norm{\Phi_{h,\hat{g}}-\Phi_{h,g}}_{\mathcal{K}} \leq 2\mu |h|\norm{\hat{g}-g}_{\mathcal{B}(\mathcal{K},|h|\kappa m)}.
\end{equation*}
These complete the first part of the proof.

Finally, using Cauchy's estimate, we deduce that for $|h|< h_1 \leq h_0$,
\begin{equation*}
\begin{aligned}
\norm{\frac{d^i}{dh^i}\left(\Phi_{h,\hat{g}}(y_0)-\Phi_{h,g}(y_0)\right)\Big|_{h=0}}
\leq \frac{ i!2\mu  \norm{\hat{g}-g}_{\mathcal{B}(\mathcal{K},h_1\kappa m)}}{h_1^{i-1}}.
\end{aligned}
\end{equation*}
By the analyticity and triangle inequality, we obtain
\begin{small}
\begin{equation*}
\begin{aligned}
&\norm{\Phi_{h,\hat{g}}(y_0)-\Phi_{h,g}(y_0)}\\
\geq& |h|\norm{\hat{g}(y_0)-g(y_0)} - \sum_{i=2}^{\infty}\norm{\frac{h^i}{i!}\frac{d^j}{dh^j}\brac{\Phi_{h,\hat{g}}(y_0)-\Phi_{h,g}(y_0)}\Big|_{h=0}}\\
\geq& |h|\norm{\hat{g}(y_0)-g(y_0)} - 2\mu|h| \norm{\hat{g}-g}_{\mathcal{B}(\mathcal{K},h_1\kappa m)}\sum_{i=2}^{\infty} \left(\frac{|h|}{h_1}\right)^{i-1}. \\
\end{aligned}
\end{equation*}
\end{small}
Therefore,
\begin{small}
\begin{equation*}
\norm{\hat{g}-g}_{\mathcal{K}} \leq \frac{\norm{\Phi_{h,\hat{g}}(y)-\Phi_{h,g}(y)}_{\mathcal{K}}}{|h|} +\frac{2\mu|h|\norm{\hat{g}-g}_{\mathcal{B}(\mathcal{K},h_1\kappa m)}}{h_1-|h|},
\end{equation*}
\end{small}
which concludes the proof.
\end{proof}
We could easily check that for the case $\kappa = 0$, i.e., Euler method, condition (\ref{con:intbound}) also holds.

\subsection{Choice of $K$ and estimation of truncation}
The series in (\ref{eq:imde}) does not converge in general and needs to be truncated. Inspired by the induction idea for conventional modified equations in \cite{reich1999backward}, we prove the truncation estimation for IMDE scenario below.
\begin{lemma}\label{lem:difference}
Let $f(y)$ be analytic in $\mathcal{B}(\mathcal{K}, r)$ and satisfies $\norm{f}_{\mathcal{B}(\mathcal{K}, r)} \leq m$. Suppose the $p$th-order numerical integrator $\Phi_{h}$ satisfies condition (\ref{con:intbound}). Take $\eta = \max\{6, \frac{b_2+1}{29}+1\}$, $\zeta = 10(\eta-1)$, $q = -\ln(2 b_2)/ \ln 0.912$ and $K$ to be the largest integer satisfying
\begin{equation*}
\frac{\zeta(K-p+2)^q|h|m}{\eta r} \leq e^{-q}.
\end{equation*}
If $|h|$ is small enough such that $K\geq p$, then the truncated IMDE satisfies
\begin{equation*}
\begin{aligned}
&\norm{\Phi_{h,f_h^{K}}-\phi_{h,f}}_{\mathcal{K}} \leq b_2\eta m e^{2q-qp}|h|e^{-\gamma /|h|^{1/q}},\\
&\norm{f_{h}^K - f}_{\mathcal{K}}\leq b_2 \eta m \brac{\frac{\zeta m}{b_1r }}^p (1+ 1.38^q d_p) |h|^p,\\
& \norm{f_{h}^K}_{\mathcal{K}} \leq (\eta-1)m,
\end{aligned}
\end{equation*}
where $\gamma = \frac{q}{e}\brac{\frac{b_1r}{\zeta m}}^{1/q}$, $d_p = p^{qp} e^{-q(p-1)}$.
\end{lemma}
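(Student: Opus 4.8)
The plan is to carry out the classical ``backward error analysis'' truncation argument (as in \cite{reich1999backward,hairer2006geometric,benettin1994hamiltonian}), but with the step-transition operator replaced by the ODE solver regarded as a one-step map, so that the only structural inputs are the three estimates in condition (\ref{con:intbound}). The argument splits into three parts: (i) an induction that bounds the IMDE coefficients $f_k$, and hence the truncated field $f_h^K$, on a nested family of shrinking complex balls; (ii) the optimisation of the truncation index $K$ built into its defining inequality; and (iii) reading off the three asserted estimates. Throughout I would use that for a $p$th-order method $f_1=\cdots=f_{p-1}=0$ (Theorem \ref{the:modiode}), so that $f_h^K-f=\sum_{k=p}^Kh^kf_k$ with leading correction $h^pf_p$, and the defect identity $\Phi_{h,f_h^{k}}-\phi_{h,f}=-h^{k+2}f_{k+1}+O(h^{k+3})$ coming from the recursion (\ref{recursion}).

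The heart of the matter is the induction in part (i), for which property 3 of (\ref{con:intbound}) is the engine. Since only (\ref{con:intbound}) is available (no explicit recursion for $f_k$), I would bound $\norm{f_h^k-f}$, equivalently the coefficients, by iterating the fundamental inequality: property 3 expresses this norm on a ball in terms of $\tfrac1{|h|}\norm{\Phi_{h,f_h^k}-\Phi_{h,f}}$ plus the contraction term $\tfrac{b_2|h|}{h_1-|h|}\norm{f_h^k-f}$ on a slightly larger ball $\mathcal{B}(\mathcal{K},b_3h_1m)$. The first term is controlled by the order of the method together with property 2, while the second, having factor $\tfrac{b_2|h|}{h_1-|h|}<1$ for $h_1>(b_2+1)|h|$, is reabsorbed by applying property 3 again on the enlarged ball. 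Chaining these applications across radii increasing in steps of $b_3h_1m$ up to $r$, and optimising the auxiliary radius $h_1$ at each stage, produces a Gevrey-type bound $\norm{f_k}_{\mathcal{B}(\mathcal{K},r_k)}\lesssim \eta m\brac{\zeta k^q/(\eta r)}^{k}$; it is precisely this optimisation that forces the exponent $q=-\ln(2b_2)/\ln 0.912$ and the ratio $0.912$. The constants $\eta=\max\{6,(b_2+1)/29+1\}$ and $\zeta=10(\eta-1)$ are tuned so the chained estimate closes with the partial sums staying below $(\eta-1)m$; summing the geometric series then yields the third assertion $\norm{f_h^K}_{\mathcal{K}}\le(\eta-1)m$ and, crucially, keeps $f_h^K$ inside the region where (\ref{con:intbound}) and property 1 remain valid, making the whole scheme self-consistent.

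With the coefficient bound in hand, parts (ii)--(iii) are optimisations. For $\norm{f_h^K-f}$ I would sum $\sum_{k\ge p}h^kf_k$, bounding the leading term $h^pf_p$ directly (giving the factor $(\zeta m/(b_1r))^p$) and the geometric tail separately (giving the $1.38^qd_p$ factor), which is the second assertion. For the residual $\norm{\Phi_{h,f_h^K}-\phi_{h,f}}$ I would fix $h$, regard $R(h)=\Phi_{h,f_h^K}(y)-\phi_{h,f}(y)$ as an analytic function of the step-size whose Taylor coefficients up to order $K+1$ vanish by construction, and apply a Cauchy estimate in $h$ over a circle whose radius is the $h$-analyticity radius of $f_h^K$ --- the radius on which $\norm{f_h^K}\le(\eta-1)m$ and property 1 guarantee analyticity. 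This gives $\norm{R(h)}\lesssim(|h|/h_1)^{K+2}$ times a bounded factor; the defining inequality for $K$ forces both $|h|/h_1\lesssim e^{-q}$ and $K\approx(b_1r/(\zeta m|h|))^{1/q}$, so that $e^{-q(K+2)}$ becomes exactly $e^{-\gamma/|h|^{1/q}}$ with $\gamma=\tfrac{q}{e}\brac{b_1r/(\zeta m)}^{1/q}$, and the accumulated constants assemble into the prefactor $b_2\eta m e^{2q-qp}|h|$.

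The hard part will be part (i): tracking every constant in the chained application of property 3 precisely enough that (a) the partial sums remain below $(\eta-1)m$, so the truncated field never leaves the domain of validity of (\ref{con:intbound}) --- without this the argument is not even well-posed --- and (b) the optimisation over the auxiliary radii emits exactly the exponent $q=-\ln(2b_2)/\ln 0.912$ rather than a merely qualitative $O(1)$ power. Verifying that the resulting recursion contracts uniformly in $k\le K$, and that its validity radius is compatible with the truncation inequality defining $K$, is where essentially all the work lies; once that induction is in place the remaining two estimates follow by the summation and Cauchy-in-$h$ arguments sketched above.
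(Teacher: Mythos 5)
Your overall architecture (coefficient induction on shrinking complex balls, truncation choice of $K$, Cauchy-in-$h$ argument for the residual, summation for the $\norm{f_h^K-f}$ bound) is the right one, and your parts (ii)--(iii) do match the paper's derivation of the first two assertions. But the engine you propose for part (i) --- which you correctly identify as where all the work lies --- is the wrong tool, and the induction as you describe it does not close. Property 3 of (\ref{con:intbound}), with its contraction factor $\frac{b_2|h|}{h_1-|h|}$ and the chaining over radii increasing in steps of $b_3h_1m$, is never used in the paper's proof of this lemma: it is the engine of the \emph{next} lemma (Lemma \ref{lem:error}), where one recovers $\norm{f_{net}-f_h^K}$ at a point from a bound on the one-step maps over a ball. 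Here the object to be bounded at each induction step is the coefficient $f_{K+1}$, which by the recursion (\ref{recursion}) is the \emph{leading Taylor coefficient in $h$} of the defect $\phi_{h,f}-\Phi_{h,f_h^K}$. A Taylor coefficient cannot be extracted by property 3, which relates finite-$h$ field differences to finite-$h$ map differences; if you try (e.g.\ with $\hat g=f_h^{K+1}$, $g=f_h^K$), the map difference you must feed in is controlled by the defects of $f_h^K$ \emph{and} $f_h^{K+1}$ with explicit constants at finite $h$ --- exactly the quantities the induction has not yet produced, so the scheme is circular. What the paper does instead is: (a) use the inductive coefficient bounds to get $\norm{f_h^K}\leq(\eta-1)m$ on a slightly shrunken ball ($\alpha\mapsto\alpha+\delta_{K+1}(1-\alpha)$ --- note the balls shrink inward, they do not grow outward to $r$ as in your chaining); (b) get a crude $O(|h|)$ bound on the defect by the triangle inequality, property 2, and analyticity of $\phi_{h,f}$; (c) divide by $h^{K+2}$, which is legitimate because the defect has a zero of order $K+2$ at $h=0$, and apply the maximum principle for analytic functions on the disc $|h|\leq h_K$ to convert the crude bound into $\norm{f_{K+1}}\leq b_2\eta m\brac{\frac{\zeta(K-p+2)^qm}{b_1(1-\alpha)r}}^{K+1}$. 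In other words, the Cauchy-estimate-in-$h$ step that you reserve for the final residual in part (iii) must be deployed \emph{inside} the induction at every step; that is the missing idea.

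A secondary but substantive error: you attribute the exponent $q=-\ln(2b_2)/\ln 0.912$ to an optimisation of the auxiliary radius $h_1$ in the property-3 chain. In the paper $q$ has nothing to do with any radius optimisation; it comes from the purely numerical estimate $\sum_{k=p+1}^{K}\brac{\frac{k-p+1}{K-p+1.9}}^{k}\leq 0.912$ (extremal at $K=6$, $p=1$), so that choosing $q$ with $0.912^{q}=\frac{1}{2b_2}$ makes the summed coefficient bounds satisfy $b_2\eta\sum_{k=p+1}^K\brac{\frac{(k-p+1)^q}{\beta_K}}^k\leq \eta/2$, which together with the definitions of $\eta$ and $\zeta$ is precisely what keeps $\norm{f_h^K}\leq(\eta-1)m$ and closes the induction. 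Since $(\eta-1)m$ is in turn what feeds the crude defect bound in step (b) above, getting this bookkeeping right is not cosmetic; without it neither the third assertion nor the validity of the maximum-principle step survives.
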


\begin{proof}
For $0 \leq \alpha< 1$ and $|h| \leq h_0 = b_1(1-\alpha)r/m$, the condition  (\ref{con:intbound}), together with the fact that $\mathcal{B}(\mathcal{B}(\mathcal{K}, \alpha r), (1-\alpha) r) = \mathcal{B}(\mathcal{K}, r)$ imply
\begin{equation*}
\begin{aligned}
&\norm{\Phi_{h,f}-\phi_{h,f}}_{\mathcal{B}(\mathcal{K}, \alpha r)}\\
\leq& \norm{\Phi_{h,f}-I_N}_{\mathcal{B}(\mathcal{K}, \alpha r)}+ \norm{\phi_{h,f}-I_N}_{\mathcal{B}(\mathcal{K}, \alpha r)}\\
\leq& (b_2+1)|h|m \leq b_1(b_2+1)(1-\alpha)r.
\end{aligned}
\end{equation*}
Here, the map $\Phi_{h,f}-\phi_{h,f}$ contains the factor $h^{p+1}$ since $\Phi_{h,f}$ is of order $p$. By the maximum principle for analytic functions \cite{scheidemann2005introduction}, we obtain
\begin{equation*}
\norm{\frac{\Phi_{h,f}-\phi_{h,f}}{h^{p+1}}}_{\mathcal{B}(\mathcal{K}, \alpha r)} \leq \frac{b_1(b_2+1)(1-\alpha)r}{h_0^{p+1}}
\end{equation*}
and thus (since (\ref{recursion}))
\begin{equation}\label{fp}
\begin{aligned}
\norm{f_p}_{\mathcal{B}(\mathcal{K}, \alpha r)} \leq& \frac{b_1(b_2+1)(1-\alpha)r}{h_0^{p+1}} \\
=& (b_2+1)m\brac{\frac{m}{b_1(1-\alpha)r}}^{p}.
\end{aligned}
\end{equation}
Below we proceed to prove that for $\alpha \in [0,1)$, if
\begin{equation*}
|h|\leq h_k:=\frac{b_1(1-\alpha)r}{\zeta(k-p+1)^qm},
\end{equation*}
then
\begin{equation}\label{fk}
\norm{f_k}_{\mathcal{B}(\mathcal{K}, \alpha r)} \leq b_2\eta  m\brac{\frac{\zeta(k-p+1)^qm}{b_1(1-\alpha)r}}^{k}
\end{equation}
for $k \geq p$ by induction, where $\eta = \max\{6, \frac{b_2+1}{29}+1\}$, $\zeta = 10(\eta-1)$ and $q = -\ln(2 b_2)/ \ln 0.912$. First, the case when $k=p$ is obvious since (\ref{fp}). Suppose now (\ref{fk}) holds for $k \leq K$. If $|h| \leq h_{K+1}$, taking
\begin{equation*}
\delta_{K+1} := \frac{\eta-1}{(K-p+2)^q\zeta} ,\ \beta_K := (1-\delta_{K+1})(K-p+2)^q
\end{equation*}
yields that for $p \leq k \leq K$
\begin{equation*}
|h|\leq \frac{b_1(1-\alpha)r}{\zeta(K-p+2)^qm} \leq \frac{b_1(1-\alpha-\delta_{K+1}(1-\alpha))r}{\zeta(k-p+1)^qm}.
\end{equation*}
Therefore, by inductive hypothesis we obtain
\begin{equation*}
\norm{f_{k}}_{\mathcal{B}(\mathcal{K}, (\alpha+\delta_{K+1} (1-\alpha)) r)} \leq b_2\eta m\brac{\frac{\zeta(k-p+1)^qm}{b_1(1-\delta_{K+1})(1-\alpha)r}}^{k}
\end{equation*}
via replacing $\alpha$ by $\alpha+\delta_{K+1}(1-\alpha) \in [\delta_{K+1}, 1)$ in (\ref{fk}).  This indicates
\begin{equation*}
\begin{aligned}
&\norm{f_{h}^K}_{\mathcal{B}(\mathcal{K}, (\alpha+\delta_{K+1} (1-\alpha)) r)} \\
\leq& m\left[1+ (b_2+1) \brac{\frac{1}{\zeta\beta_K}}^p + b_2 \eta \sum_{k=p+1}^K\brac{\frac{(k-p+1)^q}{\beta_K}}^{k} \right].\\
\end{aligned}
\end{equation*}
Since
\begin{equation*}
\sum_{k=p+1}^K\brac{\frac{(k-p+1)}{(K-p+1.9)}}^{k} \leq 0.912,
\end{equation*}
which is maximal for $K=6$ and $p=1$, and
\begin{equation*}
\sum_{k=p+1}^K\brac{\frac{(k-p+1)^q}{\beta_K}}^{k} \leq \left[\sum_{k=p+1}^K\brac{\frac{(k-p+1)}{(K-p+1.9)}}^{k}\right]^q,
\end{equation*}
we deduce that
\begin{equation}\label{fKbound}
\norm{f_{h}^K}_{\mathcal{B}(\mathcal{K}, (\alpha+\delta_{K+1} (1-\alpha)) r)} \leq (\eta-1)m.
\end{equation}
Here, we have used the definition of $\eta$, $\zeta$ and $q$.
Subsequently, by this estimate and condition (\ref{con:intbound}), we obtain
\begin{equation*}
\begin{aligned}
\norm{\Phi_{h,f_h^K}-I_N}_{\mathcal{B}(\mathcal{K}, \alpha r)} \leq& |h| b_2\norm{f_{h}^K}_{\mathcal{B}(\mathcal{K}, (\alpha+\delta_{K+1} (1-\alpha)) r)} \\
\leq& |h| b_2 (\eta-1)m,
\end{aligned}
\end{equation*}
where
\begin{equation*}
|h|\leq \frac{b_1(1-\alpha)r}{\zeta(K-p+2)^qm} = \frac{b_1\delta_{K+1}(1-\alpha)r}{(\eta-1)m}.
\end{equation*}
And then using triangle inequality yields
\begin{equation*}
\begin{aligned}
\norm{\Phi_{h,f_h^K}-\phi_{h,f}}_{\mathcal{B}(\mathcal{K}, \alpha r)}& \leq h_K b_2 \eta m.\\
\end{aligned}
\end{equation*}
Again by the maximum principle for analytic functions, together with the fact that $\Phi_{h,f_h^K}-\phi_{h,f}$ contains the factor $h^{K+2}$, we deduce that
\begin{equation}\label{phi}
\begin{aligned}
\norm{\frac{\Phi_{h,f_h^K}-\phi_{h,f}}{h^{K+2}}}_{\mathcal{B}(\mathcal{K}, \alpha r)}& \leq  \frac{h_K b_2 \eta m}{h_K^{K+2}}.\\
\end{aligned}
\end{equation}
Again by (\ref{recursion}), we conclude that
\begin{equation*}
\norm{f_{K+1}}_{\mathcal{B}(\mathcal{K}, \alpha r)} \leq  b_2 \eta m\brac{\frac{\zeta (K-p+2)^qm}{b_1(1-\alpha)r}}^{K+1},
\end{equation*}
which completes the induction.

The above induction also shows that (\ref{phi}) holds if $|h| \leq h_{K+1}$. Taking $\alpha = 0$ we have
\begin{equation*}
\norm{\Phi_{h,f_h^K}-\phi_{h,f}}_{\mathcal{K}} \leq  b_2\eta |h|m\brac{ \frac{\zeta(K-p+2)^q|h|m}{b_1r}}^{K+1}.
\end{equation*}
We set $K^*$ to be the largest integer satisfying
\begin{equation*}
\frac{\zeta (K^*-p+2)^q|h|m}{b_1r} \leq e^{-q}.
\end{equation*}
Clearly, $|h| \leq h_{K^*+1}$ with $\alpha=0$. Therefore,
\begin{equation*}
\begin{aligned}
\norm{\Phi_{h,f_h^{K^*}}-\phi_{h,f}}_{\mathcal{K}} \leq& b_2\eta |h|m\brac{ \frac{\zeta(K^*-p+2)^q|h|m}{b_1r}}^{K^*+1}\\
\leq& b_2\eta m e^{2q-qp}|h|e^{-\gamma /|h|^{1/q}},
\end{aligned}
\end{equation*}
where $\gamma = \frac{q}{e}\brac{\frac{b_1r}{\zeta m}}^{1/q}$. The first part of the lemma has been completed.

Next, according to (\ref{fk}) we obtain
\begin{equation*}
\begin{aligned}
&\norm{f_{h}^{K^*} - f}_{\mathcal{K}}\\
\leq& b_2 \eta m \brac{\frac{\zeta |h|m}{b_1r }}^p\Big[1\\
&+\sum_{k=p+1}^{K^*}\frac{(k-p+1)^{qp}}{e^{q(k-p)}}\brac{\frac{(k-p+1)}{({K^*}-p+2)}}^{q(k-p)}\Big]\\
\leq& b_2 \eta m \brac{\frac{\zeta m}{b_1r }}^p (1+ 1.38^q d_p) |h|^p,\\
\end{aligned}
\end{equation*}
where $d_p = p^{qp} e^{-q(p-1)}$ satisfies $d_p \geq (k-p+1)^{qp}e^{-q(k-p)}$ for any $k\geq p+1$.

Finally, we immediately derive the boundedness of $f_h^K$ due to (\ref{fKbound}). The proof has been completed.
\end{proof}

\subsection{Error estimation}

With Lemma \ref{lem:difference}, we first present the error estimation for one-step integrator.

\begin{lemma}\label{lem:error}
For $x \in \R^N$ and $r_1, r_2 >0$, a given  $p$th-order numerical integrator $\Phi_{h}$ satisfying condition (\ref{con:intbound}), we denote
\begin{equation*}
\mathcal{L} = \norm{\Phi_{h,f_{net}}-\phi_{h,f}}_{\mathcal{B}(x, r_1)}/h,
\end{equation*}
and suppose the assumption (\ref{con:funcbound}) are satisfied. Then, there exist integer $K=K(h)$ (as defined in Lemma \ref{lem:difference} with $r=r_2$), and constant $h_0$, $q$, $\gamma$, $c_1$, $c_2$, $C$ that depend on $m$, $r_1$, $r_2$ and the integrator $\Phi_{h}$, such that, if $0<h<h_0$,
\begin{equation*}
\begin{aligned}
&\norm{f_{net}(x) - f_h^K(x)} \leq c_1 e^{-\gamma/h^q} + C \mathcal{L},\\
&\norm{f_{net}(x) - f(x)}\leq  c_2h^p +C \mathcal{L}.\\
\end{aligned}
\end{equation*}
\end{lemma}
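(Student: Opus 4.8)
The plan is to combine the truncation estimates of Lemma \ref{lem:difference} with the ``inverse'' stability bound given by the third item of condition (\ref{con:intbound}), the latter being tamed by a ball-expanding iteration. First I would apply Lemma \ref{lem:difference} on the base set $\mathcal{K} = \mathcal{B}(x, r_1)$ with $r = r_2$, which is legitimate since assumption (\ref{con:funcbound}) gives $\norm{f}_{\mathcal{B}(x,r_1+r_2)} \leq m$. This yields three facts on $\mathcal{B}(x,r_1)$: the flow/IMDE discrepancy $\norm{\phi_{h,f} - \Phi_{h,f_h^K}}_{\mathcal{B}(x,r_1)}$ is sub-exponentially small (of size $|h|\,e^{-\gamma/|h|^{1/q}}$), the field discrepancy satisfies $\norm{f_h^K - f}_{\mathcal{B}(x,r_1)} \leq C_2 |h|^p$, and $f_h^K$ is bounded by $(\eta-1)m$ there; combined with $\norm{f_{net}} \leq m$, both $f_{net}$ and $f_h^K$ share the common bound $M := \eta m$ on $\mathcal{B}(x,r_1)$.

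Next, inserting $\phi_{h,f}$ via the triangle inequality, for any subset $\mathcal{K}' \subseteq \mathcal{B}(x, r_1)$ I would control the numerical-solution difference by
\begin{equation*}
\frac{1}{|h|}\norm{\Phi_{h,f_{net}} - \Phi_{h,f_h^K}}_{\mathcal{K}'} \leq \mathcal{L} + \frac{1}{|h|}\norm{\phi_{h,f} - \Phi_{h,f_h^K}}_{\mathcal{B}(x,r_1)} =: \mathcal{E},
\end{equation*}
where the first term uses $\norm{\Phi_{h,f_{net}} - \phi_{h,f}}_{\mathcal{B}(x,r_1)} = h\mathcal{L}$ and the second is the sub-exponential truncation bound. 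The heart of the proof is then to convert this control of the integrator outputs into control of the underlying fields through the third part of (\ref{con:intbound}), applied with the common bound $M$ and the fixed radius $r' = r_1/2$. The obstacle is the self-referential correction term $\frac{b_2|h|}{h_1-|h|}\norm{f_{net}-f_h^K}_{\mathcal{B}(\mathcal{K}, b_3 h_1 M)}$, in which the quantity being estimated reappears on a larger ball.

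I would resolve this by the standard ball-expanding iteration: choose $h_1 = (1+2b_2)|h|$ so that the contraction ratio $A := \frac{b_2|h|}{h_1-|h|} = 1/2 < 1$, and iterate the third part of (\ref{con:intbound}) on the nested balls $\mathcal{K}_j = \mathcal{B}(x, j b_3 h_1 M)$ starting from $\mathcal{K}_0 = \{x\}$. After $n$ steps this gives
\begin{equation*}
\norm{f_{net}(x) - f_h^K(x)} \leq \sum_{j=0}^{n-1} A^j \mathcal{E} + A^n \norm{f_{net}-f_h^K}_{\mathcal{K}_n} \leq \frac{\mathcal{E}}{1-A} + 2M A^n.
\end{equation*}
Since each step enlarges the radius only by $b_3 h_1 M \propto |h|$, I may take $n = \lfloor r_1/(2 b_3 h_1 M)\rfloor \sim c/|h|$ iterations while keeping $\mathcal{K}_n \subseteq \mathcal{B}(x, r_1/2)$, so that $\mathcal{B}(\mathcal{K}_j, r') \subseteq \mathcal{B}(x,r_1)$ and both the common bound $M$ and the estimate $\mathcal{E}$ stay valid; the admissibility constraints $h_1 \leq b_1 r'/M$ and $K \geq p$ fix the stated threshold $h_0$. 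Consequently $A^n = 2^{-n} \leq e^{-c'/|h|}$ is exponentially small, hence dominated by the slower sub-exponential factor $e^{-\gamma/|h|^{1/q}}$ in $\mathcal{E}$, and after relabeling constants I obtain the first inequality $\norm{f_{net}(x)-f_h^K(x)} \leq c_1 e^{-\gamma/h^q} + C\mathcal{L}$.

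Finally, the second inequality follows from one more triangle step, $\norm{f_{net}(x)-f(x)} \leq \norm{f_{net}(x)-f_h^K(x)} + \norm{f_h^K(x)-f(x)}$, invoking the field-discrepancy bound $\norm{f_h^K - f}_{\mathcal{B}(x,r_1)} \leq C_2|h|^p$ from the first step; since the sub-exponential term is $o(|h|^p)$ it is absorbed, giving $\norm{f_{net}(x)-f(x)} \leq c_2 h^p + C\mathcal{L}$. The only genuinely delicate point is the iteration — in particular the choice $h_1 \propto |h|$ that simultaneously holds $A$ away from $1$ and lets the number of affordable expansions grow like $1/|h|$, which is precisely what makes the remainder exponentially (rather than merely polynomially) small.
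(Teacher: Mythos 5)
Your proposal is correct and takes essentially the same route as the paper's proof: you bound $\frac{1}{h}\norm{\Phi_{h,f_{net}}-\Phi_{h,f_h^K}}_{\mathcal{B}(x,r_1)}$ by $\mathcal{L}$ plus the sub-exponential truncation error from Lemma \ref{lem:difference}, then iterate the third item of condition (\ref{con:intbound}) on balls expanding by $O(h)$ per step with a fixed contraction ratio, so that after $\sim r_1/(b_3 h_1 M)\propto 1/h$ steps the leftover term decays like $e^{-c/h}$ and is absorbed into the sub-exponential term, finishing with a triangle inequality and the $O(h^p)$ field-discrepancy bound. The only differences are cosmetic: the paper chooses $h_1=(eb_2+1)h$ (ratio $e^{-1}$, summarized as a telescoping fixed-point inequality) where you choose $h_1=(1+2b_2)h$ (ratio $1/2$), and your explicit confinement of the iteration to $\mathcal{B}(x,r_1/2)$ is, if anything, slightly more careful about where the common bound $M$ is needed.
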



\begin{proof}
According to the first inequality of Lemma \ref{lem:difference}, there exist $K, c, \gamma, q$ such that
\begin{equation*}
\norm{\Phi_{h,f_h^{K}}-\phi_{h,f}}_{\mathcal{B}(x, r_1)} \leq ch e^{-\gamma /h^{1/q}},
\end{equation*}
which immediately yields
\begin{equation}\label{Phi fnet-fh}
\delta : = \frac{1}{h}\norm{\Phi_{h, f_{net}}-\Phi_{h,f_h^K}}_{\mathcal{B}(x, r_1)} \leq \mathcal{L} + c e^{-\gamma /h^{1/q}}.
\end{equation}

Next, by the third inequality of Lemma \ref{lem:difference}, there exists $M \geq m$ such that $\norm{f_{h}^K}_{\mathcal{B}(x, r_1)}<M$. Let
\begin{equation*}
h_1 = (eb_2+1)h,  \  \lambda = \frac{b_2h}{h_1-h} = e^{-1}.
\end{equation*}
Using the third item of (\ref{con:intbound}), we have
\begin{equation*}
\begin{aligned}
\norm{f_{net} - f_h^K}_{\mathcal{B}(x, jh_1b_3 M)} \leq \delta + \lambda \norm{f_{net} - f_h^K}_{\mathcal{B}(x, (j+1)h_1b_3 M)}
\end{aligned}
\end{equation*}
for $0\leq j \leq r_1/h_1b_3 M$. This yields
\begin{equation*}
\begin{aligned}
&\norm{f_{net} - f_h^K}_{\mathcal{B}(x, jh_1b_3 M)} - \frac{\delta}{1-\lambda} \\
\leq& \lambda\brac{ \norm{f_{net} - f_h^K}_{\mathcal{B}(x, (j+1)h_1b_3 M)} - \frac{\delta}{1-\lambda}}.
\end{aligned}
\end{equation*}
Therefore,
\begin{equation*}\label{fnet-fh}
\begin{aligned}
\norm{f_{net}(x) - f_h^K(x)} \leq e^{-\hat{\gamma}/h} \norm{f_{net} - f_h^K}_{\mathcal{B}(x, r_1)} + \frac{\delta}{1-\lambda},
\end{aligned}
\end{equation*}
where $\hat{\gamma} = \frac{r_1}{(eb_2+1)b_3M}$. By this estimation and (\ref{Phi fnet-fh}), we conclude that
\begin{equation*}
\norm{f_{net}(x) - f_h^K(x)} \leq c_1 e^{-\gamma /h^{1/q}} + C \mathcal{L},
\end{equation*}
where $C =e/(e-1)$ and $c_1$ is constant satisfying
\begin{equation*}
c_1 \geq C \cdot c+ 2M e^{\gamma/h^{1/q} - \hat{\gamma}/h}.
\end{equation*}

Finally, by the second inequality in Lemma \ref{lem:difference}, we obtain the second estimation and complete the proof.
\end{proof}

With these results, we are able to provide the proof of Theorem \ref{thm:error}.
\begin{proof}[Proof of Theorem \ref{thm:error}]
According to the fact that several compositions of Runge-Kutta methods are again Runge-Kutta methods, Lemma \ref{lem:rk}, Lemma \ref{lem:error} and Theorem \ref{the:inmde}, we conclude the proof.
\end{proof}


%




\ifCLASSOPTIONcaptionsoff
  \newpage
\fi

\bibliographystyle{abbrv}
\bibliography{references}

\end{document}